\def\ov#1{{\overline{#1}}}
\def\wt#1{{\widetilde{#1}}}
\def\?{\ ???\ \immediate\write16{}%
\immediate\write16{Warning: There was still a question mark . . . }%
\immediate\write16{}}
\newcommand{\conv}{\operatorname{conv}}
\newcommand{\vol}{\operatorname{vol}}
\newcommand{\dd}{\hspace{1pt}\operatorname{d}\hspace{-1pt}}
\renewcommand{\div}{\operatorname{div}}
\renewcommand{\t}{\operatorname{t}}
\newcommand{\KK}{\operatorname{K}}
\newcommand{\Spec}{\operatorname{Spec}}
\newcommand{\chern}{\operatorname{c}}
\renewcommand{\i}{\operatorname{i}}
\newcommand{\e}{{\rm e}}
\newcommand{\h}{\operatorname{h}}
\newcommand{\Hom}{\operatorname{Hom}}
\newcommand{\m}{\operatorname{m}}
\newcommand{\ord}{{\operatorname{ord}}}
\newcommand{\fin}{{{\rm fin}}}
\newcommand{\hor}{{{\rm hor}}}
\newcommand{\an}{{\rm an}}
\newcommand{\gen}{{{\rm gen}}}
\renewcommand{\vert}{{\rm vert}}
\newcommand{\can}{{\rm can}}
\newcommand{\field}{\operatorname{K}}
\newcommand{\KB}{\K}
\def \C{\mathbb{C}}
\def \G{\mathbb{G}}
\def \K{\mathbb{K}}
\def \P{\mathbb{P}}
\def \Q{\mathbb{Q}}
\def \R{\mathbb{R}}
\def \SS{\mathbb{S}}
\def \T{\mathbb{T}}
\def \Z{\mathbb{Z}}
\def\cB {{\mathcal B}}
\def\cC {{\mathcal C}}
\def\cH {{\mathcal H}}
\def\cL {{\mathcal L}}
\def\cO {{\mathcal O}}
\def\cV {{\mathcal V}}
\def\cW {{\mathcal W}}
\def\cX {{\mathcal X}}
\def\cY {{\mathcal Y}}
\def\fM{{\mathfrak M}}
\newcommand{\bfm}{{\boldsymbol{m}}}
\newcommand{\bft}{{\boldsymbol{t}}}
\newcommand{\bfgamma}{{\boldsymbol{\gamma}}}
\newcommand{\bfzero}{\boldsymbol{0}}
\newcommand{\fp}{\mathfrak{p}}
\numberwithin{equation}{section}
\theoremstyle{definition}
\newtheorem{defn}{Definition}
\numberwithin{defn}{section}
\newtheorem{rem}[defn]{Remark}
\newtheorem{exmpl}[defn]{Example}
\theoremstyle{plain}
\newtheorem{prop}[defn]{Proposition}
\newtheorem{thm}[defn]{Theorem}
\newtheorem{cor}[defn]{Corollary}
\newtheorem{prop-def}[defn]{Proposition-Definition}
\begin{document}

\title[Height of varieties over finitely generated fields]{Height of varieties over finitely generated fields}

%---
\author[Burgos Gil]{Jos\'e Ignacio Burgos Gil}
\address{Instituto de Ciencias Matem\'aticas (CSIC-UAM-UCM-UCM3).
  Calle Nicol\'as Ca\-bre\-ra~15, Campus UAB, Cantoblanco, 28049 Madrid,
  Spain} 
\email{burgos@icmat.es}
\urladdr{\url{http://www.icmat.es/miembros/burgos}}
%---
\author[Philippon]{Patrice Philippon}
\address{Institut de Math{\'e}matiques de
Jussieu -- U.M.R. 7586 du CNRS, \'Equipe de Th\'eorie des Nombres.
BP 247, 4
place Jussieu, 75005 Paris, France}
\email{patrice.philippon@imj-prg.fr}
\urladdr{\url{http://www.math.jussieu.fr/~pph}}
%---
\author[Sombra]{Mart{\'\i}n~Sombra}
\address{ICREA \& Universitat de Barcelona, Departament d'{\`A}lgebra i Geometria.
Gran Via 585, 08007 Bar\-ce\-lo\-na, Spain}
\email{sombra@ub.edu}
\urladdr{\url{http://atlas.mat.ub.es/personals/sombra}}

\thanks{ Burgos Gil was partially supported by the MICINN research
  project MTM2010-17389. Philippon was partially supported by the
  CNRS international project for scientific cooperation (PICS) ``G\'
  eom\' etrie diophantienne et calcul formel'' and the ANR research
  project ``Hauteurs, modularit\'e, transcendance''.  Sombra was
  partially supported by the MINECO research project
  MTM2012-38122-C03-02.}

\date{\today} 
\subjclass[2010]{Primary 14G40; Secondary 11G50, 14M25.}
\keywords{Metrized line bundle, height of varieties, toric
  variety, Mahler measure.}

\begin{abstract}
  We show that the height of a variety over a finitely generated field
  of characteristic zero can be written as an integral of local
  heights over the set of places of the field. This allows us to apply
  our previous work on toric varieties and extend our combinatorial
  formulae for the height to compute some arithmetic intersection
  numbers of non toric arithmetic varieties over the rational
  numbers. 
\end{abstract}

\maketitle

\overfullrule=0.9mm
%\vspace{-8mm}

\section*{Introduction} \label{sec:introduction}

In \cite{Moriwaki:ahffgf,Moriwaki:cahsavfgf}, Moriwaki introduced a
notion of height for cycles over a finitely generated extension of
$\Q$. Using this definition, it was possible to  extend several central results about
cycles over a number field to cycles over a finitely
generated extension of $\Q$. These results  include  Northcott's theorem on the
finiteness of cycles with bounded degree and height, the
Manin-Mumford and the Bogomolov conjectures, Zhang's theorem on
successive algebraic minima, and the equidistribution properties of Galois
orbits of points of small height \cite{Moriwaki:ahffgf,
  Moriwaki:cahsavfgf, Moriwaki:gcblfgf, YZ:ahitaln2:sppas}.

This notion of height is defined as follows.  Let $\cB$ be an
arithmetic variety, that is, a normal flat projective scheme over
$\Spec(\Z)$, of relative dimension $b$. Set $\K=\KK(\cB)$ for its
function field, which is a finitely generated extension of $\Q$ of
transcendence degree $b$. Let $\ov \cH_{i}$, $i=1,\dots, b$, be a family
of nef Hermitian line bundles on $\cB$.

Let $\pi\colon \cX \to \cB$ be a dominant morphism of arithmetic
varieties and denote by $X$ the fibre of $\pi$ over the generic point
of $\cB$, which is a variety over $\K$.  Let $Y$ be a prime cycle of
$X$ of dimension $d$. Let $\cY$ be the closure of $Y$ in $\cX$ and
$\ov \cL_{j}$, $j=0,\dots,d$, a family of semipositive Hermitian line
bundles on $\cX$. Moriwaki defines the height of $Y$ relative to this
data as the arithmetic intersection number in the sense of
Gillet-Soul\'e given by
  \begin{equation}\label{eq:25}
    \h_{\pi ^{\ast}\ov \cH_{_{1}},\dots,\pi ^{\ast}\ov \cH_{_{b}},\ov \cL_{0},\dots, \ov \cL_{d}}(\cY),
  \end{equation}
see \cite{Moriwaki:ahffgf,Moriwaki:cahsavfgf} for details.

It is well known that this arithmetic intersection number can be
written as a sum over the places of $\Q$ of local heights of the fibre
of $\cY$ over the generic point of $\Spec(\Z) $, see for instance
\cite[\S~1.5]{BurgosPhilipponSombra:agtvmmh}.  However, for points in
a projective space and the canonical metric, Moriwaki showed that this
arithmetic intersection number is also equal to an integral of local
heights over a measured set of places of~$\K$ \cite[Proposition
3.2.2]{Moriwaki:ahffgf}.

In this paper, we extend Moriwaki's result to a cycle $Y$ of arbitrary
dimension and general semipositive metrics, to show that the height of
$Y$ is equal to an integral of local heights over this set of places
of $\K$ (Theorem \ref{thm:1}).

This allows us to apply our previous work on toric varieties in
\cite{BurgosPhilipponSombra:agtvmmh} and extend our combinatorial
formulae for the height to some arithmetic intersection numbers of non
toric arithmetic varieties. More explicitly, let $\cX \to\cB$ be a
dominant morphism of arithmetic varieties as above, and such that its
generic fibre $X$ is a toric variety over $\K$ of dimension $n$. For
simplicity, suppose that $\ov \cL_{0}=\dots=\ov \cL_{n}=\ov \cL$. The
semipositive Hermitian line bundle $\ov \cL$ defines a polytope
$\Delta$ in a linear space of dimension $n$ and, for each place $w$ of
$\K$, a concave function $\vartheta_{w}\colon \Delta\to \R$ called the
$w$-adic ``roof function'', see
\S~\ref{sec:toric-varieties} for details and pointers to the
literature.  By \cite[Theorem 5.1.6]{BurgosPhilipponSombra:agtvmmh},
the $w$-adic local height of $X$ is given by $(n+1)!$ times the
integral over $\Delta$ of this concave function.  Combining this
 with Theorem \ref{thm:1}, we derive a formula for the
corresponding arithmetic intersection number as an integral of the
function $(w,x)\mapsto \vartheta_{w}(x)$ over the product of the
polytope and the set of places of $\K$ (Corollary \ref{cor:1}).
Furthermore, we can define a global roof function $\vartheta\colon
\Delta\to \R$ by integrating the local ones over the
 set of places of $\K$. Then, in this case, the arithmetic intersection number
 \eqref{eq:25} is also equal to $(n+1)!$ times the integral of
 $\vartheta$ over the polytope (Corollary \ref{cor:2}).

 As an application, we give in \S~\ref{sec:height-transl-subt} an
 explicit formula for the case of translates of subtori of a
 projective space and canonical metrics (Corollary \ref{cor:3}).  The
 obtained integrals reduce, in some instances, to logarithmic Mahler
 measures of multivariate polynomials.

\medskip \noindent {\bf Acknowledgements.} 
Part of this work was done while the authors met at the Universitat de
Barcelona, the Instituto de Ciencias Matem\'aticas (Madrid) and the
Institut de Math\'ematiques de Jussieu (Paris). We are thankful for
their hospitality. We also thank
Julius Hertel and the referee for their useful comments.

\section{Fields with product formula from arithmetic
  varieties} \label{sec:general-varieties}

In \cite[Example 11.22]{Gubler:lchs}, Gubler observed that, for an
arithmetic variety equipped with a family of nef Hermitian line
bundles, one can endow its function field with a measured set of
places satisfying the product formula.  In this section, we explain
the details of this construction and, as an example, we explicit it
for the projective space and the universal line bundle equipped with the canonical metric. We
refer to \cite[Chapter 1]{BurgosPhilipponSombra:agtvmmh} and
\cite[\S~3]{BurgosMoriwakiPhilipponSombra:aptv} for the background for
this section on metrized line bundles and their associated measures
and heights.

Let $\cB$ be an \emph{arithmetic variety}, which means that $\cB$ is a
normal flat projective scheme over $\Spec(\Z)$. We denote by $b$ the
relative dimension of $\cB$ and by $\KB=\field(\cB)$ its function
field, which is a finitely generated extension of $\Q$ of
transcendence degree $b$.  For $i=1, \dots, b$, let $\ov
\cH_{i}=(\cH_{i},\|\cdot\|_{i})$ be a \emph{Hermitian line bundle} on
$\cB$, that is, a line bundle $\cH_{i}$ on $\cB$ equipped with a
continuous metric on the complexification $\cH_{i,\C}$ over $\cB(\C)$,
invariant under complex conjugation.  We will furthermore assume that
each $\ov \cH_{i}$ is \emph{nef} in the sense of
\cite[\S~2]{Moriwaki:ahffgf} or \cite[Definition
3.18(3)]{BurgosMoriwakiPhilipponSombra:aptv}.  This amounts to the
conditions:
\begin{enumerate}
\item \label{item:6}  the metric $\|\cdot\|_{i}$ is \emph{semipositive},
namely it is the uniform limit of a sequence of smooth semipositive
metrics as in \cite[Definition 4.5.5]{Maillot:GAdvt} or
\cite[Definition 1.4.1]{BurgosPhilipponSombra:agtvmmh}; 
\item \label{item:7}  the
height of every integral one-dimensional subscheme of $\cB$ with
respect to $\ov\cH_{i}$ is nonnegative.
\end{enumerate}

Let $\cB^{(1)}$ denote the set of hypersurfaces of $\cB$, that is, the
integral subschemes of $\cB$ of codimension 1. Let $\cV\in \cB^{(1)}$.
By \cite[Theorem 1.4(a)]{Zhang:_small} or
\cite[Proposition~2.3]{Moriwaki:ahffgf}, the hypothesis that the
$\cH_{i}$'s are nef implies that the height of $\cV$ with respect to
these Hermitian line bundles, denoted by $\h_{\ov
  \cH_{1},\dots,\ov\cH_{b}}(\cV)$, is nonnegative.  Hence, we
associate to $\cV$ the non-Archimedean absolute value on $\KB$ given,
for $\gamma\in \K$, by
\begin{equation*}
 |\gamma |_{\cV}=\e^{-\h_{\ov \cH_{1},\dots,\ov\cH_{b}}(\cV)\ord_{\cV}(\gamma )},
\end{equation*}
where $\ord_{\cV}$ denotes the discrete valuation associated to the
local ring $\cO_{\cB,\cV}$. We denote by $\mu_{\fin} $ the counting
measure of $\cB^{(1)}$.

We define the \emph{set of generic points} of $\cB(\C)$
as
\begin{equation*}
  \cB(\C)^{\gen}=\cB(\C)\setminus \bigcup_{\cV\in
    \cB^{(1)}}\cV(\C). 
\end{equation*}
By definition, a point $p\in \cB(\C)$ belongs to $\cB(\C)^{\gen}$ if
and only if, for all $\gamma \in \KB^{\times}$, this point does not
lie in the analytification of the support of $\div(\gamma )$. Hence
$|\gamma (p)|$ is a well-defined positive real number, and we
associate to $p$ the Archimedean absolute value given, for $\gamma\in
\K^{\times}$, by
\begin{equation}
  \label{eq:3}
 |\gamma |_{p}=|\gamma (p)|.
\end{equation}
On $\cB(\C)$, we consider the  measure 
\begin{displaymath}
\mu _{\infty}=  \chern_{1}(\ov \cH_{1})\land
\dots\land \chern_{1}(\ov \cH_{b})
\end{displaymath}
associated to the family of semipositive Hermitian line bundles $\ov
\cH_{i}$, $i=1,\dots, b$, as in \cite[Definition
1.4.2]{BurgosPhilipponSombra:agtvmmh}.  By \cite[Corollaire
4.2]{ChambertLoirThuillier:MMel}, the measure of each hypersurface of
$\cB(\C)$ with respect to $\mu _{\infty}$ is zero.  Since the
complement of $\cB(\C)^{\gen}$ is a countable union of hypersurfaces,
it has measure zero. We will denote also by $\mu _{\infty}$ the
induced measure on $\cB(\C)^{\gen}$.

Put then
\begin{equation}
  \label{eq:6}
  (\fM,\mu )=(\cB^{(1)},\mu_{\fin} )\sqcup (\cB(\C)^{\gen},\mu _{\infty}).
\end{equation}
The set $\fM$ is in bijection with a set of absolute values. Moreover,
all the non-Archimedean absolute values in this set are associated to
a discrete valuation.

\begin{exmpl}
  \label{exm:1}
  Let $\cB=\P^{b}_{\Z}$ with projective coordinates
  $(x_{0}:\dots:x_{b})$ and $\ov \cH_{i}=\ov{\cO(1)}^{\can}$,
  $i=1,\dots, b$, the universal line bundle on $\P_{\Z}^{b}$ equipped
  with the canonical metric as in \cite[Example
  1.4.4]{BurgosPhilipponSombra:agtvmmh}.  We have that $\K=\KK( \cB)\simeq
  \Q(z_{1},\dots, z_{b})$, with $z_{i}=x_{i}/x_{0}$.

  Consider the compact subtorus of
  $\P^{b}_{\Z}(\C)$ given by
  \begin{displaymath}
\SS=\{ (1:z_{1}:\dots:z_{b})\in \P^{b}_{\Z}(\C) \mid |z_{i}|=1 \text{
  for all } i\} \simeq (S^{1})^{b}
  \end{displaymath}
  and the measure $\mu _{\SS}$ of 
  $\P^{b}(\C)$ given by the current
  \begin{displaymath}
    \frac{1}{(2\pi i)^{b}}\frac{\dd z_{1}}{z_{1}}\wedge \dots\wedge
    \frac{\dd z_{b}}{z_{b}}\wedge \delta _{\SS}.
  \end{displaymath}
  Namely, $\mu_{\SS}$ is the Haar probability measure on $\SS$. 

  A hypersurface $\cV$ of $\P_{\Z}^{b}$ corresponds to an irreducible
  homogeneous polynomial $P_{\cV}\in \Z[x_{0},\dots, x_{b}]$.  The
  associated absolute value is given, for $\gamma\in \K^{\times}$, by
  \begin{equation*}%\label{eq:32} 
\log |\gamma|_{\cV}= {-\ord_{\cV}(\gamma)\m(P_{\cV})},
  \end{equation*} 
  where $\m(P_{\cV})$ is the logarithmic Mahler measure of
  $P_{\cV}$ given by
  \begin{displaymath}
    \m(P_{\cV})=\int \log|P_{\cV}(1,z_1,\dots,z_b)| \dd \mu_{\SS}.
  \end{displaymath}
  If $P_{\cV}$ is a irreducible polynomial of degree
  zero, then $P_{\cV}=p\in \Z$, a prime number. In this case
  $\m(P_{\cV})=\log(p)$ and $\cV$ is the fibre over the point
  corresponding to $p$.

  The absolute value associated to a point of $\P_{\Z}^{b}(\C)^{\gen}$
  is given by the Archimedean absolute value of the evaluation at this
  point as in \eqref{eq:3}.

  In this example, the measure $\mu_{\fin}$ on $(\P^{b}_{\Z})^{(1)}$ is the counting
  measure and the measure
  $\mu_{\infty}$ is the restriction to $\P_{\Z}^{b}(\C)^{\gen}$ of $\mu_{\SS}$.
\end{exmpl}

A function on a measured space is \emph{integrable} (also called
\emph{summable}) if its integral is a well-defined real number. 

\begin{prop}\label{prop:1} 
  For each $\gamma \in \KB^{\times}$, the function $\fM\to \R$ given
  by $w\mapsto \log |\gamma |_{w}$ is $\mu$-integrable. Furthermore, the ``product formula''
  \begin{equation}\label{eq:18}
    \int_{\fM}\log|\gamma |_{w}\dd \mu
    (w)=0
  \end{equation}
holds.  
\end{prop}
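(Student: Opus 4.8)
The plan is to reduce the product formula to the arithmetic analogue of the classical fact that the degree of a principal divisor vanishes, namely to the identity $\h_{\overline{\cO_{\cB}},\ov\cH_{1},\dots,\ov\cH_{b}}(\cB)=0$ for the trivial Hermitian line bundle $\overline{\cO_{\cB}}$ on $\cB$. Writing $\mu=\mu_{\fin}\sqcup\mu_{\infty}$ as in~\eqref{eq:6}, I would split the integral over $\fM$ into its finite and its Archimedean parts and show that these two contributions are the negatives of one another.

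For the finite part, recall that $\div(\gamma)=\sum_{\cV\in\cB^{(1)}}\ord_{\cV}(\gamma)[\cV]$ is a \emph{finite} sum, so that only finitely many $\cV$ contribute; since each height $\h_{\ov\cH_{1},\dots,\ov\cH_{b}}(\cV)$ is a real number, the function $w\mapsto\log|\gamma|_{w}$ is $\mu_{\fin}$-integrable and, by the definition of $|\gamma|_{\cV}$ together with the linearity of the height in the cycle,
\begin{equation*}
  \int_{\cB^{(1)}}\log|\gamma|_{w}\dd\mu_{\fin}(w)
  =-\sum_{\cV\in\cB^{(1)}}\ord_{\cV}(\gamma)\,\h_{\ov\cH_{1},\dots,\ov\cH_{b}}(\cV)
  =-\h_{\ov\cH_{1},\dots,\ov\cH_{b}}(\div(\gamma)).
\end{equation*}

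For the Archimedean part, I would first invoke the fact that $\log|\gamma|$, having at worst logarithmic singularities along the analytification of the support of $\div(\gamma)$, is integrable against the semipositive measure $\mu_{\infty}=\chern_{1}(\ov\cH_{1})\wedge\dots\wedge\chern_{1}(\ov\cH_{b})$; this is part of the construction of heights and of Chambert-Loir's measures in the semipositive case, see \cite[Chapter~1]{BurgosPhilipponSombra:agtvmmh}, \cite[\S~3]{BurgosMoriwakiPhilipponSombra:aptv} and \cite{ChambertLoirThuillier:MMel}. As the complement of $\cB(\C)^{\gen}$ has $\mu_{\infty}$-measure zero and $\log|\gamma|_{p}=\log|\gamma(p)|$ on $\cB(\C)^{\gen}$, integrability over $\cB(\C)^{\gen}$ follows. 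The recursion formula for heights, applied to $\overline{\cO_{\cB}}$ with $\gamma$ regarded as a meromorphic section and to the semipositive bundles $\ov\cH_{1},\dots,\ov\cH_{b}$ (valid for semipositive metrics by the usual approximation argument), then reads
\begin{equation*}
  0=\h_{\overline{\cO_{\cB}},\ov\cH_{1},\dots,\ov\cH_{b}}(\cB)
  =\h_{\ov\cH_{1},\dots,\ov\cH_{b}}(\div(\gamma))-\int_{\cB(\C)^{\gen}}\log|\gamma|_{w}\dd\mu_{\infty}(w).
\end{equation*}
Adding this to the finite contribution computed above yields the vanishing~\eqref{eq:18}.

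The routine points are the linearity of the height in the cycle and the bookkeeping of signs and normalization constants in the recursion formula; the genuine content that must be borrowed from the cited references is the integrability of $\log|\gamma|$ against the \emph{limit} measure $\mu_{\infty}$ and the validity of the recursion formula for merely semipositive (rather than smooth) metrics on the normal scheme $\cB$. Granting these, Proposition~\ref{prop:1} is just a restatement of the vanishing of the arithmetic degree of the trivial line bundle, organized according to the places of $\K$.
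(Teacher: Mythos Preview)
Your proposal is correct and follows essentially the same route as the paper's proof: both split the integral over $\fM$ into its finite and Archimedean parts, handle integrability of the finite part via the finite support of $\div(\gamma)$, invoke \cite[Th\'eor\`eme~4.1]{ChambertLoirThuillier:MMel} for the Archimedean integrability, and then recognize the total as $-\h_{\ov\cO,\ov\cH_{1},\dots,\ov\cH_{b}}(\cB)$ via the arithmetic B\'ezout (recursion) formula, which vanishes by multilinearity. The only cosmetic difference is that you state the target identity $\h_{\overline{\cO_{\cB}},\ov\cH_{1},\dots,\ov\cH_{b}}(\cB)=0$ up front and work towards it, whereas the paper derives the integral first and observes the vanishing at the end.
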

\begin{proof} Given $\gamma\in \K^{\times}$, the set of hypersurfaces
  $\cV$ such that $ |\gamma |_{\cV}\not =1$ is contained in the set of
  components of the support of $\div(\gamma )$, when $\gamma $ is
  viewed as a rational function on $\cB$. Hence this set is finite, and
  so the function on $\cB^{(1)}$ given by $\cV\mapsto
  \log|\gamma|_{\cV}$ is $\mu_{\fin}$-integrable. Moreover, by
  \cite[Th\'eor\`eme 4.1]{ChambertLoirThuillier:MMel}, the function on
  $\cB(\C)^{\gen}$ given by $p\mapsto \log |\gamma (p)|$ is $\mu
  _{\infty}$-integrable.  Summing up, $\log |\gamma |_{w}$ is
  $\mu$-integrable, which proves the first statement.

For the second one, let $\ov \cO$ be the
  trivial metrized line bundle on $\cB$.  Then
  \begin{align*}
    \int_{\fM}\log|\gamma |_{w}\dd \mu
    (w)&=    \sum_{\cV\in \cB^{(1)}}  -\ord_{\cV}(\gamma )\h_{\ov
      \cH_{1},\dots,\ov \cH_{b}}(\cV)+
    \int_{\cB(\C)^{\gen}}\log|\gamma (p)|\dd \mu
    _{\infty}(p)\\ 
    &=-\h_{\ov \cO,\ov     \cH_{1},\dots,\ov \cH_{b}}(\cB)
  \end{align*}
  by the arithmetic B\'ezout formula, see for instance
  \cite[(3.2.2)]{BostGilletSoule:HpvpGf} for the smooth case or
  \cite[Th\'eor\`eme 1.4]{ChambertLoirThuillier:MMel} for an adelic
  version in the general case. From the multilinearity of the height,
  it follows that $\h_{\ov \cO,\ov \cH_{1},\dots,\ov \cH_{b}}(\cB)=0$,
  which concludes the proof.
\end{proof}

\begin{defn}
  \label{def:3}
Given $\bfgamma=(\gamma_{0},\dots, \gamma_{n}) \in \K^{n+1} \setminus
\{\bfzero\}$, the  \emph{size} of $\bfgamma $ with respect to
$(\K,\fM, \mu)$ is defined as
\begin{equation}
  \label{eq:1}
  \t_{\K,\fM, \mu}(\bfgamma)=\int_{\fM}\log\max
  (|\gamma_{0}|_{w},\dots, |\gamma_{n}|_{w}) \dd\mu(w).
\end{equation}
\end{defn}

\begin{exmpl}
  \label{exm:2}
  Let $\K\simeq \Q(z_{1},\dots, z_{b})$ with the measured set of places
  $(\fM,\mu)$ as described in Example \ref{exm:1}.  Let $\gamma\in \K^{\times}$
  given in reduced representation as $\gamma=\alpha/\beta$ with
  coprime $\alpha,\beta\in \Z[z_{1},\dots, z_{b}]$. Using the product
  formula \eqref{eq:18}, the size of $\gamma$ can be given in this
  case by
\begin{displaymath}
  \t_{\K,\fM,\mu}(\gamma)=\int_{\mathfrak{M}}\log \max(|\alpha|_{w},|\beta|_{w})\dd \mu(w)
\end{displaymath}
Since $\alpha$ and $\beta$ are coprime,
the contribution of the integral over the places of
$(\P^{b}_{\Z})^{(1)}$ is zero. Hence,
\begin{equation}\label{eq:33}
  \t_{\K,\fM,\mu}(\gamma)=\frac{1}{(2\pi i)^{b}} \int_{(S^{1})^{b}}\log
  \max(|\alpha(z)|,|\beta(z)|)\frac{\dd z_{1}}{z_{1}}\wedge \dots\wedge \frac{\dd z_{b}}{z_{b}}.
\end{equation}
Using Jensen's formula, this size can be alternatively written as the
logarithmic Mahler measure of the polynomial
\begin{equation}
  \label{eq:26}
P_{\gamma}=\alpha(z_{1},\dots, z_{b}) t_{1}-\beta(z_{1},\dots, z_{b})  \in \Z[t_{1},z_{1},\dots, z_{b}],
\end{equation}
where $t_{1}$ denotes an additional variable. The difference between
this size and the logarithm of the maximum of the absolute values of the
coefficients of $\alpha$ and $ \beta$ can be bounded by the maximum of
their degrees times a constant depending only on~$b$.
\end{exmpl}

\section{Relative arithmetic varieties}
\label{sec:relat-arithm-vari}

In this section we prove our main result (Theorem \ref{thm:1}),
showing that the height of a cycle over the finitely generated
extension $\K$ can be written as an integral of the local heights of
this cycle over the measured set of places $(\fM,\mu)$.

Let $\pi\colon \cX \to \cB$ be a dominant morphism of arithmetic
varieties of relative dimension $n\ge 0$ and $\ov \cL$ a Hermitian
line bundle on $\cX$.  We denote by $X$ the fibre of $\pi$ over the
generic point of $\cB$. This is a variety over $\KB$ of dimension $n$,
and the line bundle $\cL$ induces a line bundle on $X$, denoted by
$L$. There is a  collection of metrics on the
analytifications of $L$ for each absolute value of $\fM$, that we now
describe.
 
For each $\cV\in \cB^{(1)}$, the local ring $\cO_{\cB,\cV}$ is a
discrete valuation ring with field of fractions $\KB$. The scheme
$\cX$ and the line bundle $\cL$ induce a projective model over $\Spec
(\cO_{\cB,\cV})$, denoted $(\cX_{\cV},\cL_{\cV})$, of the pair
$(X,L)$. Following Zhang, the model $(\cX_{\cV},\cL_{\cV})$ induces a
metric on the analytification $L_{\cV}^{\an}$ over $X_{\cV}^{\an}$,
see \cite{Zhang:_small} or \cite[Definition
1.3.5]{BurgosPhilipponSombra:agtvmmh} for details.

The map $\pi$ also induces a map of complex analytic spaces
$\cX(\C)\to \cB(\C)$, that we also denote by $\pi$.  A point $p\in
\cB(\C)^{\gen}$ induces an Archimedean absolute value $|\cdot|_{p}$ on
$\K$ and the analytification of the variety $X$ with respect to
$|\cdot|_{p}$ can be identified with the fibre $\pi ^{-1}(p)\subset
\cX(\C)$, with its structure of real analytic space when the point $p$
is real.  The analytification of the line bundle $L$ on $X$ with
respect to $|\cdot|_{p}$ can also be identified with the restriction
of $\cL_{\C}$ to $\pi ^{-1}(p)$. Then the metric on $L^{\an}_{p}$ is
defined as the restriction of the metric on
$\cL_{\C}$ to this fibre. We then denote
\begin{equation}\label{eq:20}
  \ov L=(L,(\|\cdot\|_{w})_{w\in \fM})
\end{equation}
the obtained \emph{$\fM$-metrized line bundle} on $X$.

Let $Y$ be a $d$-dimensional cycle on $X$ and $\ov{L}_{i}$,
$i=0,\dots, d$, $\fM$-metrized line bundles on $X$ as in
\eqref{eq:20}.  We assume that each $\ov L_{i}$ is constructed from a
\emph{DSP} Hermitian line bundle $\ov\cL_{i}$ on $\cX$. Recall that a
\emph{DSP} (difference of semipositive) Hermitian line bundle on $\cX$ is the quotient of two
semipositive ones as in \cite[Definition
1.4.1]{BurgosPhilipponSombra:agtvmmh}.  

Given a collection of nonzero rational
sections $s_{i}$ of $\cL_{i}$, $i=0,\dots, d$, intersecting properly
on $Y$ and $w\in\fM$,  we denote by
\begin{displaymath}
 \h_{\ov L_{0,w}, \dots,
  \ov L_{d,w}}(Y;s_{0},\dots,s_{d})
\end{displaymath}
the {local height} of $Y$ with respect to the family of $w$-adic metrized line
bundles $\ov L_{i,w}:=(L_{i},\|\cdot\|_{i,w})$, $i=0, \dots, d$. 
It is defined inductively on the dimension of $Y$ by the arithmetic B\'ezout formula
\begin{multline} \label{eq:10}
     \h_{\ov L_{0,w}, \dots,
  \ov L_{d,w}}(Y;s_{0},\dots,s_{d})=\h_{\ov L_{0,w}, \dots,
  \ov L_{d,w}}(Y\cdot \div s_{d};s_{0},\dots,s_{d-1}) 
   \\ -\int_{X_{w}^{\an}}\log\|s_{d}\|_{d,w} \bigwedge_{i=0}^{d-1}
    \chern_{1}(\ov L_{i,w})\wedge \delta _{Y},
\end{multline}
see \cite[Definition 1.4.11]{BurgosPhilipponSombra:agtvmmh}. Recall that
the local height
We will show in Theorem \ref{thm:1} below that the function $\fM\to \R$
given, for $w\in\fM$, by
\begin{equation}\label{eq:21}
  w\longmapsto \h_{(L_{0},\|\cdot\|_{0,w}), \dots,
    (L_{d},\|\cdot\|_{d,w})}(Y;s_{0},\dots,s_{d})
\end{equation}
is $\mu$-integrable. 

\begin{defn}\label{def:4}
  With notation as above, the \emph{global height} of $Y$ with respect
  to the $\fM$-metrized line bundles $\ov L_{i}$, $i=0,\dots, d$, is
  defined as the integral of the function in \eqref{eq:21}, that is
\begin{equation*}
  \h_{\ov L_{0},\dots, \ov L_{d}}(Y)= \int_{\fM}
  \h_{(L_{0},\|\cdot\|_{0,w}), \dots,
    (L_{d},\|\cdot\|_{d,w})}(Y;s_{0},\dots,s_{d}) \dd\mu(w).
\end{equation*}  
\end{defn}

Thanks to the product formula, this notion does not depend on the
choice of the sections $s_{i}$.

% The following example shows that this notion of height generalizes
% Moriwaki's ``naive height'' for points in a projective space in
% \cite[\S~3.2]{Moriwaki:ahffgf}.

\begin{exmpl}
\label{exm:4}
Let $\cB$ be an arithmetic variety as above and $(\K,\fM,\mu)$ the
associated finitely generated field and measured set of places. Let 
\begin{displaymath}
\cX=\P_{\cB}^{n}\simeq \cB \times \P_{\Z}^{n} \quad \text{ and } \quad
\ov \cL=
\varpi^{*}\ov{\cO_{\P^{n}_{\Z}}(1)}^{\can},   
\end{displaymath}
where $\varpi$ denotes the projection $ \cB \times \P_{\Z}^{n} \to
\P_{\Z}^{n}$.  Hence $X=\P^{n}_{\K}$ and $\ov L=
\ov{\cO_{\P^{n}_{\K}}(1)}^{\can}$. For a point
$p=(\gamma_{0}:\dots:\gamma_{n})\in X(\K)= \P^{n}_{\K}(\K)$, we have
\begin{equation}
  \label{eq:23}
\h_{\ov L}(p)=   \t_{\K,\fM, \mu}(\bfgamma), 
\end{equation}
where $ \t_{\K,\fM, \mu}(\bfgamma)$ denotes the size of the vector
$\bfgamma=(\gamma_{0},\dots, \gamma_{n})$ as in \eqref{eq:1}. 
This is the ``naive height'' in \cite[\S~3.2]{Moriwaki:ahffgf}.
\end{exmpl}

The following projection formula for heights of schemes over
$\Spec(\Z)$ generalizes \cite[Proposition 1.3(1)]{Moriwaki:ahffgf}.

\begin{prop} \label{prop:2} Let $\pi \colon \cW\to \cV$ be a morphism
  between two finitely generated projective schemes over $\Spec(\Z)$ of relative
  dimensions $d+b-1$ and 
  $b-1$, respectively, with $b,d\ge0$. Let $\ov \cL_{i}$, $i=1,\dots,
  d$, and $\ov \cH_{j}$, $j=1,\dots, b$, be DSP Hermitian line bundles
  on $\cW$ and $\cV$, respectively.  Then
  \begin{displaymath}
     \h_{\pi ^{\ast}\ov \cH_{_{1}},\dots,\pi ^{\ast}\ov \cH_{_{b}},\ov
       \cL_{1},\dots, \ov \cL_{d}}(\cW)=
   \deg_{\cL_{1},\dots,\cL_{d}}(\cW_{\eta}) \h_{\ov
      \cH_{1},\dots,\ov
      \cH_{b}}(\cV), 
  \end{displaymath}
where $\cW_{\eta}$ denotes the  fibre of $\cW$ over
the generic point $\eta$ of $\cV$. In particular, if $\pi$ is not
dominant, then $\h_{\pi ^{\ast}\ov \cH_{_{1}},\dots,\pi ^{\ast}\ov \cH_{_{b}},\ov
       \cL_{1},\dots, \ov \cL_{d}}(\cW)=0$. 
\end{prop}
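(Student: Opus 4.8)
The plan is to deduce the identity from the standard functoriality of arithmetic intersection numbers, after some harmless reductions. First I would invoke the multilinearity of the height in each metrized line bundle to write each of the $\ov\cL_i$ and $\ov\cH_j$ as a quotient of two semipositive Hermitian line bundles, reducing to the case in which all of them are semipositive. Next, both sides of the asserted equality are additive along the irreducible components of $\cW$: the left-hand side by additivity of the height on cycles, and the right-hand side because $\deg_{\cL_1,\dots,\cL_d}((\cW_k)_\eta)=0$ whenever a component $\cW_k$ of $\cW$ fails to dominate $\cV$. Hence I may assume $\cW$ integral, and likewise $\cV$ integral (the case implicit in the statement, which refers to the generic point $\eta$); the height and its functorial properties are available for such, not necessarily regular, projective arithmetic schemes in the framework recalled in \cite[Chapter 1]{BurgosPhilipponSombra:agtvmmh} and \cite[\S~3]{BurgosMoriwakiPhilipponSombra:aptv}.

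Then I would set $\Theta=\chern_1(\ov\cL_1)\cdots\chern_1(\ov\cL_d)\cdot[\cW]$, an arithmetic cycle class on $\cW$ of dimension $b$ (as $\cW$ has Krull dimension $d+b$). Its proper pushforward $\pi_*\Theta$ has dimension $b$ on $\cV$, which itself has Krull dimension $b$; so $\pi_*\Theta$ is a class of top dimension, and in top dimension an arithmetic cycle admits no Green-current component, so that the corresponding group is $\Z\,[\cV]$. Thus $\pi_*\Theta=m\,[\cV]$ for some $m\in\Z$, and reading $m$ off from the underlying geometric cycle $\pi_*\bigl(\chern_1(\cL_1)\cdots\chern_1(\cL_d)\cap[\cW]\bigr)$ by restriction to the generic fibre of $\pi$ gives $m=\deg_{\cL_1,\dots,\cL_d}(\cW_\eta)$ --- understood to be $0$ when $\pi$ is not dominant, i.e.\ when $\cW_\eta=\emptyset$, which already settles the last assertion. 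Finally, combining the projection formula $\pi_*(\pi^*\alpha\cdot\xi)=\alpha\cdot\pi_*\xi$ with the invariance of the arithmetic degree under proper pushforward yields
\[
\h_{\pi^*\ov\cH_1,\dots,\pi^*\ov\cH_b,\ov\cL_1,\dots,\ov\cL_d}(\cW)
=\adeg\bigl(\chern_1(\ov\cH_1)\cdots\chern_1(\ov\cH_b)\cdot\pi_*\Theta\bigr)
=m\,\h_{\ov\cH_1,\dots,\ov\cH_b}(\cV),
\]
which is the claimed formula.

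The step needing the most care is not really a discrete step but the ambient bookkeeping: everything must be carried out in the arithmetic intersection theory of possibly singular projective arithmetic schemes equipped with semipositive Hermitian line bundles, where $\chern_1$, the intersection pairing, the projection formula and the pushforward-compatibility of the arithmetic degree are obtained from the smooth Gillet--Soul\'e theory by the usual uniform-limit arguments; one must in particular be sure that $\pi_*\Theta$ carries no archimedean part, which is automatic here since its dimension equals the Krull dimension of $\cV$. An alternative, avoiding arithmetic cycles altogether, would be an induction on $d$ using the inductive B\'ezout formula \eqref{eq:10} for local heights, with base case $d=0$ given by the classical identity expressing the degree of a scheme via the degree of its generic fibre under a generically finite morphism; but the pushforward argument above is shorter and cleaner.
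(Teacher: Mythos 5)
Your proposal takes a genuinely different route from the paper, and the route you choose carries a foundational debt that the paper's argument is specifically designed to avoid. The paper proves the proposition by induction on $d$ via the arithmetic B\'ezout recursion \eqref{eq:10}: after reducing to ample $\cL_i$ with smooth semipositive metrics, one peels off $\ov\cL_{d}$ with a rational section $s_{d}$, observes that the archimedean integral vanishes because $\bigwedge_{i=1}^{b}\chern_{1}(\ov\cH_{i})=0$ on the $(b-1)$-dimensional $\cV(\C)$, and that the components of $\div(s_{d})$ which are vertical over $\Spec(\Z)$ contribute $0$ because their images in $\cV$ have dimension at most $b-1$; the base case $d=0$ is \cite[Proposition 1.3(2)]{Moriwaki:ahffgf} together with \cite[Theorem 1.5.11(2)]{BurgosPhilipponSombra:agtvmmh}. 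This is exactly the ``alternative'' you sketch in your last sentence and set aside. Your primary argument --- push $\Theta=\chern_{1}(\ov\cL_{1})\cdots\chern_{1}(\ov\cL_{d})\cdot[\cW]$ forward to $\cV$, note that in codimension $0$ the Green-current component dies for degree reasons so that $\pi_{*}\Theta=m[\cV]$, then apply the projection formula --- is the standard, conceptually cleanest way to see the statement, and your dimension bookkeeping is right; the disappearance of the metrics on the $\cL_{i}$ through $\pi_{*}g=0$ is the same phenomenon as the vanishing of the archimedean term in the paper's proof.

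The gap is the one you flag but underestimate. The proposition is stated for arbitrary (possibly singular, possibly non-generically-smooth) projective schemes over $\Spec(\Z)$ and for DSP metrics. Arithmetic cycle groups with a product, a proper pushforward and a projection formula are available off the shelf only for regular (or at least generically smooth) arithmetic varieties with smooth Hermitian metrics; the ``usual uniform-limit arguments'' take care of the metrics but not of the singularities of $\cW$ and $\cV$, and neither this paper nor the references it relies on set up a cycle-level theory with $\pi_{*}$ in this generality --- heights there are defined only through the B\'ezout recursion, not as degrees of arithmetic cycle classes. So, as written, your proof invokes a projection formula that has not been established in the setting of the statement; making it rigorous would require either a reduction to the regular, smooth-metric case (which you do not supply), or a direct proof of the needed projection formula by induction on $d$ via \eqref{eq:10} --- which is precisely the paper's proof. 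Your preliminary reductions (to semipositive metrics and to $\cW$, $\cV$ integral) are fine and harmless.
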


\begin{proof}
By linearity, we can reduce  to the case when the
$\cL_{i}$'s are ample and the metrics are semipositive. By continuity,
we can also reduce to the case when the metrics in $\ov{\cL_{i}}$ and
$\ov{\cH}_{j}$ are smooth for all $i,j$. 

We proceed by induction on $d$. The case $d=0$ is given by
\cite[Proposition 1.3(2)]{Moriwaki:ahffgf} in the case when $\pi$ is
dominant and by
\cite[Theorem 1.5.11(2)]{BurgosPhilipponSombra:agtvmmh} in the general
case. Let $d\ge 1$ and choose a nonzero rational section $s_{d}$ of
$\cL_{d}$. Let $\|\cdot\|_{d}$ denote the metric of
$\ov{\cL_{d}}$. By the arithmetic B\'ezout formula,
\begin{multline} \label{eq:17}
     \h_{\pi ^{\ast}\ov \cH_{_{1}},\dots,\pi ^{\ast}\ov \cH_{_{b}},\ov
       \cL_{1},\dots, \ov \cL_{d}}(\cW)= 
     \h_{\pi ^{\ast}\ov \cH_{_{1}},\dots,\pi ^{\ast}\ov \cH_{_{b}},\ov
       \cL_{1},\dots, \ov \cL_{d-1}}(\div(s_{d}))\\ 
-     \int_{\cW(\C)}\log\|s_{d}\|_{d} \bigwedge_{i=1}^{b}
    \chern_{1}(\pi ^{\ast}\ov \cH_{i})\wedge  
\bigwedge_{j=1}^{d-1}\chern_{1}(\ov \cL_{j}).
\end{multline}
Since $\dim(\cV(\C))=b-1$, we have that $\bigwedge_{i=1}^{b}
    \chern_{1}(\ov \cH_{i})=0$. Hence, the measure in the integral in the right-hand
    side of \eqref{eq:17} is zero, and so this integral is zero too. 
Decompose the divisor of $s_{d}$ into its horizontal and vertical
components over $\Spec(\Z)$ as
\begin{displaymath}
  \div(s_{d})= \div(s_{d})_{\hor}+\div(s_{d})_{\vert}.
\end{displaymath}
Write $\div(s_{d})_{\vert}=\sum_{\fp\in\Spec(\Z)}Z_{\fp}$ as a finite sum
of schemes  over the primes.  We have that $
\deg_{\pi ^{\ast}\cH_{_{1}},\dots,\pi ^{\ast}\cH_{_{b}},
  \cL_{1},\dots, \cL_{d-1}}(Z_{\fp})=0$ because
$\dim(\pi(\div(s_{d})_{\vert}))\le b-1$. It follows that
\begin{multline*}
     \h_{\pi ^{\ast}\ov \cH_{_{1}},\dots,\pi ^{\ast}\ov \cH_{_{b}},\ov
       \cL_{1},\dots, \ov \cL_{d-1}}(\div(s_{d})_{\vert})\\= \sum_{\fp\in
       \Spec(\Z)} \log(\fp) \deg_{\pi ^{\ast}\cH_{_{1}},\dots,\pi ^{\ast}\cH_{_{b}},
  \cL_{1},\dots, \cL_{d-1}}(Z_{\fp})=  0.
\end{multline*}
By the inductive hypothesis, 
\begin{equation*}
     \h_{\pi ^{\ast}\ov \cH_{_{1}},\dots,\pi ^{\ast}\ov \cH_{_{b}},\ov
       \cL_{1},\dots, \ov \cL_{d-1}}(\div(s_{d})_{\hor})=
   \deg_{\cL_{1},\dots,\cL_{d-1}}(\div(s_{d})_{\hor,\eta}) \h_{\ov
      \cH_{1},\dots,\ov
      \cH_{b}}(\cV).
\end{equation*}
Since $ \deg_{\cL_{1},\dots,\cL_{d-1}}(\div(s_{d})_{\hor,\eta})=
\deg_{\cL_{1},\dots,\cL_{d}}(\cW_{\eta})$, we obtain the result.
\end{proof}

% The following 

%   Let $\pi\colon \cX \to \cB$ be a dominant morphism of arithmetic
%   varieties of relative dimension $n\ge 0$ and  $\ov \cL_{j}$, $j=0,\dots,r$,
% a family of  DSP Hermitian line bundles on $\cX$. Given a prime cycle $Y$ of
%   the  fibre $X$ over the generic point of $\cB$, Moriwaki defines its height
%   as the arithmetic intersection number in the sense of Gillet-Soul\'e
%   given by
%   \begin{equation}\label{eq:125}
%     \h_{\pi ^{\ast}\ov \cH_{_{1}},\dots,\pi ^{\ast}\ov \cH_{_{d}},\ov \cL_{0},\dots, \ov \cL_{r}}(\cY)
%   \end{equation}
%   with  $\cY$
%   the closure of $Y$ in $\cX$
%  \cite{Moriwaki:ahffgf,Moriwaki:cahsavfgf}. 

% In this paper, we extend Moriwaki's result
% to a cycle $Y$  of arbitrary dimension and general semipositive metrics, to
% show that the height of $Y$  is equal to an
% integral of local heights over this set of places of $\K$
% (Theorem \ref{thm:1}).

%  He also shows that, for points in a projective space over a finitely
%  generated field, this notion coincides with the so-called ``naive
%  height'' \cite[Proposition 3.3.2]{Moriwaki:ahffgf}.  Theorem
%  \ref{thm:1} below generalizes this result to cycles of arbitrary
%  dimension and shows that Moriwaki's notion of height coincides with
%  the one defined in \eqref{eq:19}.
% As a result, 
%  and shows
% that mixed heights of cycles over $\K$ can be interpreted as classical
% mixed heights.

\begin{thm}\label{thm:1} 
  Let $\cB$ be an arithmetic variety of relative dimension $b$ and
  $\ov \cH_{i}$, $i=1, \dots, b$, nef Hermitian line bundles on
  $\cB$. Let $ \KB=\field(\cB)$ be the function field of $\cB$ and
  $(\fM,\mu)$ the associated measured set of places as in
  \eqref{eq:6}.

  Let $\pi\colon \cX \to \cB$ be a dominant morphism of arithmetic
  varieties of relative dimension $n$ and $X$ the fibre of $\pi$ over
  the generic point of $\cB$.  Let $Y$ be a prime cycle of $X$ of
  dimension $d$ and $\cY$ its closure in $\cX$.  Let $\ov \cL_{j}$,
  $j=0,\dots,d$, be DSP Hermitian line bundles on $\cX$ and $\ov
  L_{j}$, $j=0,\dots,d$, the associated $\fM$-metrized line bundles as
  in \eqref{eq:20}.  Let $s_{0},\dots,s_{d}$ be rational sections of
  $\cL_{0},\dots ,\cL_{d}$ respectively, intersecting properly on $Y$.
  Then the function $\fM\to \R$ given, for $w\in\fM$, by
\begin{equation}\label{eq:37}
  w\longmapsto \h_{(L_{0},\|\cdot\|_{0,w}), \dots,
    (L_{d},\|\cdot\|_{d,w})}(Y;s_{0},\dots,s_{d})
\end{equation}
is $\mu$-integrable. Moreover,
  \begin{equation}\label{eq:7}
    \h_{\ov L_{0},\dots, \ov L_{d}}(Y)=
    \h_{\pi ^{\ast}\ov \cH_{_{1}},\dots,\pi ^{\ast}\ov \cH_{_{b}},\ov \cL_{0},\dots, \ov \cL_{d}}(\cY).
  \end{equation}
In other words, the integral of the function \eqref{eq:37} coincides with the height of $Y$
as defined in \eqref{eq:25}. 
\end{thm}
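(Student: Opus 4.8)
The plan is to prove the $\mu$-integrability of \eqref{eq:37} and the identity \eqref{eq:7} simultaneously, by induction on $d=\dim Y$, by integrating the local arithmetic B\'ezout recursion \eqref{eq:10} over the measured set $(\fM,\mu)$ and matching the result, term by term, with the arithmetic B\'ezout formula (of the shape \eqref{eq:17}) that computes the right-hand side of \eqref{eq:7}. As in the proof of Proposition \ref{prop:2}, the multilinearity of the local and global heights reduces us to the case where the $\ov\cL_{j}$ are semipositive, and then uniform approximation reduces us to smooth metrics, the interchange of the limit with $\int_{\fM}$ being controlled by the uniform convergence together with the finiteness on $\cB^{(1)}$ noted below. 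Fix rational sections $\wt s_{j}$ of $\cL_{j}$ on $\cX$ restricting to $s_{j}$ on $X$. Since $\cY$ is the closure of $Y$ in $\cX$, it restricts to $Y$ over the generic point of $\cB$; hence $\wt s_{d}$ does not vanish along $\cY$, and the divisor $\div(\wt s_{d})\cdot\cY$ decomposes as
\[
  \div(\wt s_{d})\cdot\cY=\ov{Y\cdot\div(s_{d})}+V,
\]
where $\ov{Y\cdot\div(s_{d})}$ is the closure in $\cX$ of the $(d-1)$-cycle $Y\cdot\div(s_{d})$ and $V$ is \emph{vertical over $\cB$}, i.e.\ $\pi(V)$ lies in a finite union of hypersurfaces of $\cB$; when $d=0$ the cycle $Y\cdot\div(s_{0})$ is empty and $\div(\wt s_{0})\cdot\cY=V$.

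Integrating \eqref{eq:10} over $\fM$ produces two terms. The first, $w\mapsto\h_{\ov L_{0,w},\dots,\ov L_{d-1,w}}(Y\cdot\div(s_{d});s_{0},\dots,s_{d-1})$, is $\mu$-integrable with integral $\h_{\pi^{\ast}\ov\cH_{1},\dots,\pi^{\ast}\ov\cH_{b},\ov\cL_{0},\dots,\ov\cL_{d-1}}(\ov{Y\cdot\div(s_{d})})$: this follows from the induction hypothesis applied to each prime component of $Y\cdot\div(s_{d})$, from linearity of the local and global heights, and from the fact that the closure in $\cX$ of a prime component of $Y\cdot\div(s_{d})$ is the corresponding component of $\ov{Y\cdot\div(s_{d})}$. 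The second term, $\int_{\fM}\bigl(\int_{X_{w}^{\an}}\log\|s_{d}\|_{d,w}\land\bigwedge_{i<d}\chern_{1}(\ov L_{i,w})\land\delta_{Y}\bigr)\dd\mu(w)$, splits along \eqref{eq:6}. Its restriction to $\cB(\C)^{\gen}$ equals, by the identity $\mu_{\infty}=\chern_{1}(\ov\cH_{1})\land\dots\land\chern_{1}(\ov\cH_{b})$ and integration along the fibres of $\pi\colon\cX(\C)\to\cB(\C)$, the Archimedean integral $\int_{\cX(\C)}\log\|\wt s_{d}\|_{d}\land\bigwedge_{j}\chern_{1}(\pi^{\ast}\ov\cH_{j})\land\bigwedge_{i<d}\chern_{1}(\ov\cL_{i})\land\delta_{\cY}$ appearing in the global B\'ezout formula; the $\mu_{\infty}$-integrability of the inner fibre integral follows from \cite[Th\'eor\`eme 4.1]{ChambertLoirThuillier:MMel}.

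Its restriction to $\cB^{(1)}$ equals $-\h_{\pi^{\ast}\ov\cH_{1},\dots,\pi^{\ast}\ov\cH_{b},\ov\cL_{0},\dots,\ov\cL_{d-1}}(V)$. Indeed, for each hypersurface $\cV$ the model $(\cX_{\cV},\cL_{\cV})$ over $\Spec(\cO_{\cB,\cV})$ is the localization of $(\cX,\cL)$ along $\cV$, so by Chambert-Loir's description of the measure $\bigwedge_{i<d}\chern_{1}(\ov L_{i,\cV})\land\delta_{Y}$ on $X_{\cV}^{\an}$ as a finite combination of Dirac masses at divisorial points, together with the fact that the absolute value $|\cdot|_{\cV}$ carries the factor $\h_{\ov\cH_{1},\dots,\ov\cH_{b}}(\cV)$ into the model metric $\|\cdot\|_{d,\cV}$, the integral $\int_{X_{\cV}^{\an}}\log\|s_{d}\|_{d,\cV}\land\bigwedge_{i<d}\chern_{1}(\ov L_{i,\cV})\land\delta_{Y}$ is minus the contribution to $\h_{\pi^{\ast}\ov\cH_{\bullet},\ov\cL_{0},\dots,\ov\cL_{d-1}}(V)$ of the part of $V$ lying over $\cV$, the passage from the model intersection number on $\cX_{\cV}$ to that height being an instance of the projection formula of Proposition \ref{prop:2}. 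Since $V$ has finitely many components, only finitely many $\cV$ contribute, which also yields the $\mu_{\fin}$-integrability. Adding the Archimedean and non-Archimedean parts, comparing with the global B\'ezout formula for $\h_{\pi^{\ast}\ov\cH_{\bullet},\ov\cL_{0},\dots,\ov\cL_{d}}(\cY)$, and using the decomposition $\div(\wt s_{d})\cdot\cY=\ov{Y\cdot\div(s_{d})}+V$ together with additivity of heights, we obtain both the $\mu$-integrability of \eqref{eq:37} and the equality \eqref{eq:7}. The base case $d=0$ is the special case in which the first term is absent, so that no induction hypothesis is invoked; it already contains the computation of the ``naive height'' of Example \ref{exm:4}.

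The main obstacle is the non-Archimedean identification over $\cB^{(1)}$: one must prove that, for each hypersurface $\cV$, the integral of $\log\|s_{d}\|_{d,\cV}$ against $\bigwedge_{i<d}\chern_{1}(\ov L_{i,\cV})\land\delta_{Y}$ over the Berkovich analytification relative to the completion of $(\K,|\cdot|_{\cV})$ is computed by the intersection numbers on $\cX$ of the components of $V$ lying over $\cV$, so that Proposition \ref{prop:2} applies; this rests on the compatibility of Zhang's model metrics with the localization $\Spec(\cO_{\cB,\cV})\to\cB$ and on Chambert-Loir's explicit formula for the associated measures. The Archimedean matching by fibre integration and the various integrability and limit-exchange statements are more routine but still need care, using that the non-generic locus of $\cB(\C)$ has $\mu_{\infty}$-measure zero and the regularity results of \cite[Th\'eor\`eme 4.1]{ChambertLoirThuillier:MMel}.
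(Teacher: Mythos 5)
Your proposal follows essentially the same route as the paper: induction on $d$ via the arithmetic B\'ezout recursion, decomposition of $\div(s_d)\cdot\cY$ into the closure of $Y\cdot\div(s_d)$ and a part vertical over $\cB$, the projection formula of Proposition \ref{prop:2} to evaluate the vertical and non-Archimedean contributions, and \cite[Th\'eor\`eme 4.1]{ChambertLoirThuillier:MMel} for the Archimedean side. The one place where your sketch undersells the difficulty is the Archimedean matching: the identity
\begin{displaymath}
\int_{\cB(\C)^{\gen}}\Big(\int_{\pi^{-1}(p)}\log\|s_{d}\|\textstyle\bigwedge_{i<d}\chern_{1}(\ov\cL_{i})\Big)\dd\mu_{\infty}(p)
=\int_{\cX(\C)}\log\|s_{d}\|\textstyle\bigwedge_{i<d}\chern_{1}(\ov\cL_{i})\land\bigwedge_{j}\chern_{1}(\pi^{\ast}\ov\cH_{j})
\end{displaymath}
is not a formal ``integration along the fibres,'' because the metrics on the $\ov\cH_{j}$ are only semipositive (so $\mu_{\infty}$ is a genuine measure, not a smooth form, and cannot be regularized away without changing both sides of \eqref{eq:7}) and $\log\|s_{d}\|$ is unbounded; the paper devotes roughly half its proof to this point, via Ehresmann's fibration theorem, a partition of unity, a weak-convergence argument identifying the product decomposition of the limit measure, and only then Fubini, followed by a separate uniform-approximation step for non-smooth metrics on the $\ov\cL_{j}$. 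Your reduction ``uniform approximation reduces us to smooth metrics'' is legitimate only for the $\ov\cL_{j}$, not for the $\ov\cH_{i}$, and the Fubini step for non-smooth $\ov\cH_{i}$ is the real content you would still need to supply.
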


\begin{proof}
By linearity, we reduce to the case when the line bundles $\cL_{j}$
are ample, their metrics are semipositive and the sections are global
sections. Moreover, since multiplying one of the metrics on $\cL_{j}$
changes both sides of the equality \eqref{eq:7} by the same additive
constant, we can assume that the sections $s_{j}$ of $\cL_{j}$,
$j=0,\dots,d$ are small, in the sense that $\sup_{p\in
  X(\C)}\|s_{j}(p)\|_{j}\le 1$.       

  We proceed by induction on the dimension of $Y$. If $\dim(Y)=-1$,
  then $Y=\emptyset$ and so the local heights of $Y$ are zero. Hence,
  these local heights are $\mu$-integrable and, by Proposition \ref{prop:2},
  the equality in \eqref{eq:7} is reduced to $0=0$.

  We now assume that $\dim(Y)=d\ge 0$. In this case, the restriction
  $\pi|_{\cY}\colon \cY \to\cB$ is dominant. Since the height does not
  change by normalization, by restricting objects to $\cY$ and pulling back to
  its normalization, we may assume in the computations that follow
  that $\cY=\cX$. In particular, $Y=X$ and $d=n=\dim(X)$.

  Let $s_{0},\dots,s_{n}$ be global sections of $\cL_{0},\dots
  ,\cL_{n}$ respectively, that meet properly on $X$, and denote by
  $\rho \colon \fM\to \R$  the local height function in
  \eqref{eq:37}.  We have to show that this function is
  $\mu$-integrable and that
  \begin{displaymath}
    \int_{\fM}\rho (w)\dd \mu (w)=
    \h_{\pi ^{\ast}\ov \cH_{_{1}},\dots,\pi ^{\ast}\ov \cH_{_{b}},\ov
      \cL_{0},\dots, \ov \cL_{n}}(\cX).
  \end{displaymath}

  For each $w\in \fM$, by the definition of local heights in
  \eqref{eq:10}, we can write $ \rho(w) =\rho _{1}(w) -\rho _{2}(w) $
  with
  \begin{align*}
\rho_{1}(w)&=
    \h_{(L_{0},\|\cdot\|_{0,w}),\dots,
      (L_{n-1},\|\cdot\|_{n-1,w})}(\div(s_{n});s_{0},\dots,s_{n-1}),\\
\rho_{2}(w)&= \int_{X_{w}^{\an}}\log\|s_{n}\|_{n,w}\bigwedge_{j=0}^{n-1}
    \chern_{1}(L_{j},\|\cdot\|_{j,w}).
  \end{align*}
  We decompose the cycle $\div(s_{n})$ as
  \begin{displaymath}
    \div(s_{n})=\div(s_{n})_{\hor/\cB}+\div(s_{n})_{\vert/\cB}, 
  \end{displaymath}
  where $\div(s_{n})_{\hor/\cB}$ contains all the components that are
  dominant over $\cB$ and $\div(s_{n})_{\vert/\cB}$ contains the
  remaining ones. Clearly, $\div(s_{n})_{\hor/\cB}$ is the closure of
  $\div(s_{n})\cdot X$, and $\div(s_{n})_{\vert/\cB}$ contains all the
  components of $\div(s_{n})$ that do not meet $X$.

  By the inductive hypothesis, the function $w\mapsto \rho _{1}(w)$ is
  $\mu$-integrable and
  \begin{equation}\label{eq:5}
  \int_{\fM}  \rho _{1}(w)\dd \mu (w)=\h_{\pi ^{\ast}\ov
    \cH_{_{1}},\dots,\pi ^{\ast}\ov \cH_{_{b}},\ov \cL_{0},\dots, \ov
    \cL_{n}}(\div(s_{n})_{\hor/\cB}).
  \end{equation}
  Let now $w=\cV\in \cB^{(1)}$. The local ring $\cO_{\cB,\cV}$ is a
  discrete valuation ring. The scheme $\cX$ and the line bundle
  $\cL_{i}$ induce models $\cX_{\cV}$ and $\cL_{i,\cV}$ over
  $\Spec(\cO_{\cB,\cV})$ of $X$ and $L_{i}$.  Each component of the
  special fibre of $\cX_{\cV}$ is the localization
  $$\cW_{\cV}=\cW\underset{\cV}{\times}\Spec(K(\cV))$$ of a 
  hypersurface $\cW\in \cX^{(1)}$ with $\pi(\cW)=\cV$. Since the
  metric over $w$ is an algebraic metric coming from a model, by
  \cite[(1.3.6) and Remark 1.4.14]{BurgosPhilipponSombra:agtvmmh},
  \begin{equation}\label{eq:22}
    \rho _{2}(\cV)=
    - \sum_{\substack{\cW\in \cX^{(1)}\\\pi (\cW)=\cV}}\h_{\ov
      \cH_{1},\dots,\ov
      \cH_{b}}(\cV)\ord_{\cW}(s_{n})\deg_{\cL_{0},\dots,\cL_{n-1}}(\cW_{\cV}). 
  \end{equation}
  Since the number of components of $\div(s_{n})$ is finite, we
  deduce from \eqref{eq:22} that there is only a finite number of
  $\cV\in \cB^{(1)}$ with $\rho _{2}(\cV)\not = 0$. Thus $\rho _{2}$ is
  integrable on $\cB^{(1)}$ with respect to the counting measure $\mu_{\fin}$
  as in \eqref{eq:6}.  By Proposition \ref{prop:2},
  \begin{displaymath}
    \h_{\ov
      \cH_{1},\dots,\ov
      \cH_{b}}(\cV)\deg_{\cL_{0},\dots,\cL_{n-1}}(\cW_{\cV})=
     \h_{\pi ^{\ast}\ov \cH_{_{1}},\dots,\pi ^{\ast}\ov \cH_{_{b}},\ov
       \cL_{0},\dots, \ov \cL_{n-1}}(\cW). 
  \end{displaymath}
The same result implies that, if $\dim(\pi(\cW)) \le b-1$, then
  \begin{displaymath}
         \h_{\pi ^{\ast}\ov \cH_{_{1}},\dots,\pi ^{\ast}\ov \cH_{_{b}},\ov
       \cL_{0},\dots, \ov \cL_{n-1}}(\cW)=0.
  \end{displaymath}
  Since
  \begin{align*}
    \div(s_{n})_{\vert/\cB}&=
    \sum
    _{\substack{\cW\in \cX^{(1)}\\\dim (\pi (\cW))\le b}}\ord_{\cW}(s_{n})\cW\\
    &=\sum_{\cV\in \cB^{(1)}}\sum_{\substack{\cW\in \cX^{(1)}\\\pi
        (\cW)=\cV}}\ord_{\cW}(s_{n})\cW+
      \sum_{\substack{\cW\in \cX^{(1)}\\\dim (\pi (\cW))\le
        b-1}}\ord_{\cW}(s_{n})\cW,
  \end{align*}
  it follows from \eqref{eq:22} that
  \begin{align} \label{eq:15}
\nonumber     \int_{\cB^{(1)}} \rho _{2}(w) \dd \mu_{\fin} (w)=&\sum_{\cV\in
    \cB^{(1)}} \rho_{2}(\cV)\\ 
\nonumber =& -\sum_{\cV\in
    \cB^{(1)}}\sum_{\substack{\cW\in \cX^{(1)}\\\pi 
        (\cW)=\cV}}\ord_{\cW}(s_{n}) \h_{\pi ^{\ast}\ov
    \cH_{_{1}},\dots,\pi ^{\ast}\ov \cH_{_{b}},\ov \cL_{0},\dots, \ov
    \cL_{n}}(\cW) \\
=& -\h_{\pi ^{\ast}\ov
    \cH_{_{1}},\dots,\pi ^{\ast}\ov \cH_{_{b}},\ov \cL_{0},\dots, \ov
    \cL_{n}}(\div(s_{n})_{\vert/\cB}).
  \end{align}

  We next consider the places associated to the points $p\in
  \cB(\C)^{\gen}$. In this case, by the definition of $\rho_{2}$, we have that 
  \begin{equation*}
    \rho_{2} (p)= 
\int_{\pi^{-1}(p)}\log\|s_{n}\|_{n}\bigwedge_{j=0}^{n-1}
    \chern_{1}(\ov \cL_{j,\C}|_{\pi^{-1}(p)}) ,
  \end{equation*}
  where $\pi$ denotes the projection $\cX\to \cB$ and $\|\cdot\|_{n}$
  the metric in $\ov \cL_{n}$.  We have to show that $\rho_{2} $ is
  $\mu _{\infty}$-integrable with $\mu _{\infty}=\bigwedge_{i=1}^{b}
  \chern_{1}(\ov \cH_{i})$ and that
  \begin{equation} \label{eq:8}
\int_{\cB(\C)^{\gen}}\rho_{2} (p)\dd \mu _{\infty}(p)= 
  \int_{\cX(\C)}\log\|s_{n}\|_{n}\bigwedge_{j=0}^{n-1}\chern_{1}(\ov \cL_{j})
    \land \bigwedge_{i=1}^{b}
    \chern_{1}(\pi ^{\ast}\ov \cH_{i}).
  \end{equation}

% When the metrics on $\cL_{j}$, $j=0,\dots,n-1$, and $\cH_{i}$,
% $i=1,\dots,b$, are smooth and semipositive, the generalized Fubini's
% theorem in \cite[Chapter I, (2.15)]{Demailly:cadg}, shows that
% $\rho_{2} $ is $\mu_{\infty}$-integrable and that the equality  \eqref{eq:8}
% holds.

We first assume that, for each $j=0,\dots,n $, the metric on the line
bundle $\cL_{j}$ is smooth, but  that the metric on
 $\cH_{i}$, $i=1,\dots, b$, is not necessarily smooth. By definition,  there is a
sequence of smooth semipositive metrics $(\|\cdot\|_{i,k})_{k\ge0}$
on $\cH_{i,\C}$ that converge to $\|\cdot \|_{i}$. Set $\ov
\cH_{i,k}=(\cH_{i},\|\cdot\|_{i,k})$ and  let $\mu _{\infty,k}$ be the
measure associated to the differential form
    \begin{displaymath}
      \chern_{1}(\ov \cH_{1,k})\land \dots \land
      \chern_{1}(\ov \cH_{b,k}). 
    \end{displaymath}
    By \cite[Th\'eor\`eme 4.1]{ChambertLoirThuillier:MMel}, the
    measures $\mu _{\infty,k}$ converge weakly to $\mu _{\infty}$.  By
    the same result, even if $\log\|s_{n}\|_{n}$ is not bounded, the
    equality
    \begin{multline*}%\label{eq:13}
      \lim_{k\to \infty}
      \int_{\cX(\C)}\log\|s_{n}\|_{n}\bigwedge_{j=0}^{n-1}\chern_{1}(\ov
      \cL_{j}) 
    \land \bigwedge_{i=1}^{b}
    \chern_{1}(\pi ^{\ast}\ov \cH_{i,k})\\
    = 
    \int_{\cX(\C)}\log\|s_{n}\|_{n}\bigwedge_{j=0}^{n-1}\chern_{1}(\ov
    \cL_{j}) 
    \land \bigwedge_{i=1}^{b}
    \chern_{1}(\pi ^{\ast}\ov \cH_{i})
  \end{multline*}
  holds. Let $U\subset \cB(\C)$ be a connected Zariski open subset
  such that the
  restriction  $\pi\mid_{\pi^{-1}(U)}$ is a proper smooth map. By Ehresmann's
  fibration theorem, this restriction is a locally trivial
  proper differentiable fibration. Thus, that there exists a
  compact differentiable manifold 
  $F$ and an analytic open cover $\{U_{\alpha
  }\}_{\alpha}$ of $U$ such that $\pi ^{-1}(U_{\alpha })$ is diffeomorphic to
  $F\times U_{\alpha }$ for every $\alpha$. Let $\{\nu _{\alpha
  }\}_{\alpha }$ be a partition of
  unity subordinated to the open cover $\{U_{\alpha }\}_{\alpha }$. 

  Fix an $\alpha$.  To avoid burdening the notation, we identify $\pi
  ^{-1}(U_{\alpha })$ with $F\times U_{\alpha }$ through the above
  diffeomorphism.
Let $\lambda  _{F}$ be a measure of $F$ given by a
volume form. Since the metrics $\|\cdot\|_{j}$ are
  smooth, there is a smooth function $g\colon F\times U_{\alpha }\to \R$
  such that, for each $u\in U_{\alpha }$,
  \begin{displaymath}
    \bigwedge_{j=0}^{n-1}\chern_{1}(\ov \cL_{j})\, \bigg|_{\{u\}\times
      F}=g(\cdot,u)\lambda _{F}. 
  \end{displaymath}
  By \cite[Theorem 3.2]{Billinsley:cpm}, the measures $\lambda
  _{F}\times \bigwedge_{i=1}^{b}
  \chern_{1}(\pi ^{\ast}\ov \cH_{i,k})$ converge weakly to the
  measure $\lambda
  _{F}\times \bigwedge_{i=1}^{b}
  \chern_{1}(\pi ^{\ast}\ov \cH_{i})$. By the unicity of weak limits
  of measures,
  \begin{equation}\label{eq:9}
    \bigwedge_{j=0}^{n-1}\chern_{1}(\ov \cL_{j})
    \land \bigwedge_{i=1}^{b}
    \chern_{1}(\pi ^{\ast}\ov \cH_{i})\, \bigg|_{F\times U_{\alpha
      }}=g\lambda _{F}\times \bigwedge_{i=1}^{b}
    \chern_{1}(\pi ^{\ast}\ov \cH_{i}).
  \end{equation}
  Since $\log \|s_{n}\|_{n}$ is integrable with respect to $\bigwedge_{j=0}^{n-1}\chern_{1}(\ov \cL_{j})
    \land \bigwedge_{i=1}^{b}
    \chern_{1}(\pi ^{\ast}\ov \cH_{i})$, by \eqref{eq:9} the function $(\nu _{\alpha
    }\circ \pi )\log \|s_{n}\|_{n} \, g$ is integrable with respect to
    $\lambda _{F}\times \bigwedge_{i=1}^{b}
    \chern_{1}(\pi ^{\ast}\ov \cH_{i})$. By  Fubini's theorem
    \cite[Theorem 2.6.2]{Federer:GMT}, the function
    \begin{displaymath}
      \int_{F} (\nu _{\alpha
    }\circ \pi )\log \|s_{n}\|_{n} \, g \lambda _{F}=\nu _{\alpha }\rho _{2}
    \end{displaymath}
    is $\mu _{\infty }$-integrable and 
  \begin{align*}
\nonumber     \int_{U_{\alpha }}\nu _{\alpha }\rho_{2} (p)\dd \mu _{\infty}(p)&= 
  \int_{F\times U_{\alpha }}(\nu _{\alpha }\circ \pi
  )\log\|s_{n}\|_{n}\, g \lambda _{F}\times
    \bigwedge_{i=1}^{b}
    \chern_{1}(\pi ^{\ast}\ov \cH_{i}) \\ &=
  \int_{\pi ^{-1}(U_{\alpha })}(\nu _{\alpha }\circ \pi )\log\|s_{n}\|_{n}\bigwedge_{j=0}^{n-1}\chern_{1}(\ov \cL_{j})
    \land \bigwedge_{i=1}^{b}
    \chern_{1}(\pi ^{\ast}\ov \cH_{i}),
  \end{align*}
  where the last equality follows from \eqref{eq:9}.

  Since the above holds for every $\alpha$, Lebesgue's monotone
  convergence theorem \cite[Corollary 2.4.8]{Federer:GMT} and the fact
  that $\mu_{\infty(}\cB(\C)\setminus U))=0$ which follows from \cite[Corollaire
  4.2]{ChambertLoirThuillier:MMel}, imply that $\rho _{2}$ is $\mu
  _{\infty}$-integrable and that \eqref{eq:8} holds. Observe that we
  can apply Lebesgue's monotone convergence theorem because we are
  assuming that the section $s_{n}$ is small, and so the function
  $\log\|s_{n}\|_{n}$ is nonpositive.

  We now assume that the metrics on $\cL_{j}$ and $\cH_{i}$ are
not necessarily smooth, and choose sequences of smooth semipositive metrics
  $(\|\cdot\|_{j,k_{j}})_{k_{j}\ge0}$  on $\cL_{j}$ that converge
  uniformly to
  $\|\cdot\|_{j}$. For $p\in \cB(\C))^{\gen}$, write
    \begin{equation}\label{eq:12}
    \rho_{2,k_{0},\dots,k_{n}}
    (p)=\int_{\pi^{-1}(p)}\log\|s_{n}(p)\|_{n,k_{n}}\bigwedge_{j=0}^{n-1}\chern_{1}( \ov \cL_{j,k_{j},\C}|_{\pi^{-1}(p)}).
  \end{equation}
  From equation \eqref{eq:12} when $i=n$, and from equation
  \eqref{eq:12} and Stokes' theorem when $i\not = n$, one can prove
  that, for each
  $\varepsilon >0$, there is a constant $K_{i}$ that does not
  depend on $p$ nor on $k_{j}$, $j\not = i$, such that, for all
  $k_{i},k'_{i}\ge K_{i}$, 
  \begin{equation} \label{eq:11}
    |\rho _{2,k_{0},\dots,k_{i},\dots,k_{n}}(p)-\rho
    _{2,k_{0},\dots,k'_{i},\dots,k_{n}}(p)|\le \varepsilon.
  \end{equation}

  For $k\ge 0$, denote by  $\rho_{2,k}$  the function in \eqref{eq:12} for the
  choice of indices $k_{0}=\dots=k_{n}=k$. We deduce from \eqref{eq:11} that the
  diagonal sequence $(\rho _{2,k})_{k\ge 0}$ converges uniformly to
  $\rho _{2}$. Since the measure $\mu_{\infty}$ has finite total mass
  and, by the previous case, the functions $\rho _{2,k}$ are $\mu
  _{\infty }$-integrable, we deduce that $\rho_{2} $ is $\mu _{\infty
  }$-integrable and that
  \begin{equation*}
    \lim_{k\to \infty} \int_{\cB(\C)}\rho_{2,k}(p)\dd \mu_{\infty}
    (p)=
    \int_{\cB(\C)}\rho _{2}(p)\dd \mu_{\infty} (p).
  \end{equation*}
  Therefore, using \eqref{eq:8} for the functions $\rho _{2,k}$ and
  \cite[Th\'eor\`eme 4.1]{ChambertLoirThuillier:MMel}, we deduce
  that \eqref{eq:8} also holds in the case when all the metrics are
  semipositive.

  In consequence,  $\rho =\rho _{1}-\rho _{2}$ is $\mu
  $-integrable and, using \eqref{eq:5}, \eqref{eq:15}, \eqref{eq:8},
  the arithmetic B\'ezout theorem in \eqref{eq:10} and the inductive hypothesis, 
  \begin{align*}
    \int_{\fM}\rho (w)\dd \mu (w)&= \h_{\pi ^{\ast}\ov
      \cH_{_{1}},\dots,\pi ^{\ast}\ov \cH_{_{b}},\ov \cL_{0},\dots, \ov
      \cL_{n}}(\div(s_{n})_{\hor/\cB})\\
    &\phantom{=\ }
    + \h_{\pi ^{\ast}\ov
      \cH_{_{1}},\dots,\pi ^{\ast}\ov \cH_{_{b}},\ov \cL_{0},\dots, \ov
      \cL_{n}}(\div(s_{n})_{\vert/\cB})\\
    &\phantom{=\ }-
    \int_{\cX(\C)}\log\|s_{n}\|_{n}\bigwedge_{j=0}^{n-1}\chern_{1}(\ov \cL_{j})
    \land \bigwedge_{i=1}^{b}
    \chern_{1}(\pi ^{\ast}\ov \cH_{i})\\
    &=\h_{\pi ^{\ast}\ov \cH_{_{1}},\dots,\pi ^{\ast}\ov \cH_{_{b}},\ov
      \cL_{0},\dots, \ov \cL_{n}}(\cX),
  \end{align*}
which concludes the proof. 
\end{proof}

\begin{exmpl} \label{exm:3} Let $\K\simeq \Q(z_{1},\dots, z_{b})$
 with the measured set of places $(\fM,\mu)$ as in Examples
  \ref{exm:1} and \ref{exm:2}.
Let
\begin{displaymath}
  \cX=\P^{b}_{\Z} \times
  \P_{\Z}^{1} \quad \text{ and } \quad \ov \cL=
  \varpi^{*}\ov{\cO_{\P^{1}_{\Z}}(1)}^{\can},   
\end{displaymath}
where $\varpi$ denotes the projection $ \P^{b}_{\Z}\times \P_{\Z}^{1}
\to \P_{\Z}^{1}$, and let $\ov L={\ov {\cO_{\P^{1}_{\K}}(1)}^{\can}}$
denote the canonical $\fM$-metrized line bundle structure on the
universal line bundle of $\P^{1}_{\K}$ as in Example \ref{exm:4}. 

Let $(1:\gamma) \in \P^{1}_{\Z}(\K)$ with $\gamma\in \K^{\times}$.
The closure $\cY $ of this point in $\cX$ is the hypersurface defined
by the bihomogenization of the polynomial $P_{\gamma}$ in
\eqref{eq:26}.  In this case, Theorem \ref{thm:1} together with
\eqref{eq:23} and \eqref{eq:33} gives
\begin{displaymath}
\h_{\pi ^{\ast}\ov
    \cH_{_{1}},\dots,\pi ^{\ast}\ov {\cH_{b}},\ov \cL}(\cY) =\h_{\ov L}(1:\gamma)=
  \t_{\K,\fM,\mu}(\gamma)=   \m(P_{\gamma}),
\end{displaymath}
where $\m(P_{\gamma})$ denotes the logarithmic Mahler measure
of $P_{\gamma}$.
\end{exmpl}

\section{Height of toric varieties over finitely generated
  fields} \label{sec:toric-varieties} 

Using our previous work on toric varieties in
\cite{BurgosPhilipponSombra:agtvmmh}, we can give a ``combinatorial''
formula for the mixed height of a toric variety with respect to a
family of $\fM$-metrized line bundles. As a consequence of Theorem
\ref{thm:1}, this formula also expresses an arithmetic intersection number, in the sense
of Gillet-Soul\'e, of a non toric arithmetic variety.

Let $\cB$ be an arithmetic variety of relative dimension $b$ and $\ov
\cH_{i}=(\cH_{i},\|\cdot\|_{i})$, $i=1,\dots,b$, a family of nef
Hermitian line bundles on $\cB$, as at the beginning of
\S~\ref{sec:general-varieties}. Let $\K=\KK(\cB)$ and $(\fM,\mu)$ the
associated set of places of $\K$ as in \eqref{eq:6}.

Let $\T\simeq \G_{m}^{n}$ be a split torus of dimension $n$ over
$\K$. Let $N=\Hom(\G_{m},\T)$ be the lattice of cocharacters of $\T$,
$M=\Hom(\T,\G_{m})=N^{\vee}$ the lattice of characters, and set
$N_{\R}=N\otimes \R$ and $M_{\R}=M\otimes \R$.

Let $X$ be a proper toric variety over $\K$ with torus $\T$, described
by a complete fan $\Sigma$ on $N_{\R}$. % Recall that, to each cone
% $\sigma \in \Sigma $ correspond an open affine subset
% $X_{\sigma }$ and an orbit $O(\sigma )$. In particular, for $\sigma
% =\{0\}$ we obtain the principal open subset $X_{0}$ that, in this case,
% agrees with the orbit $O(0)$. It is canonically isomorphic to the
% split torus $\T$ that acts on the toric variety $X$. The action of
% $\T$ on $X$ will be denoted by $(t,p)\mapsto t\cdot p$.
A toric divisor on $X$ is a Cartier divisor invariant under the action
of $\T$. Such a divisor $D$ defines a ``virtual support function'',
that is, a function $\Psi_{D}\colon N_{\R}\to \R$ whose restriction to
each cone of the fan~$\Sigma$ is an element of $M$.  The toric divisor
$D$ is nef if and only if $\Psi_{D}$ is concave.  One can also
associate to $D$ the polytope defined as 
\begin{displaymath}
  \Delta_{D}=\{x\in M_{\R}\mid x\ge \Psi_{D}\}.
\end{displaymath}

Now let
\begin{displaymath}
 \pi \colon \cX\to \cB 
\end{displaymath}
be a dominant morphism of arithmetic varieties of relative dimension
$n\ge 0$ and $\ov \cL$ a Hermitian line bundle on $\cX$. We assume
that $(X,L)$, the fibre of $(\cX,\cL)$ over the generic point of
$\cB$, is a toric variety over $\K$ with a line bundle associated to a
toric divisor $D$ on $X$. We consider the associated $\fM$-metrized
line bundle $\ov L$ on $X$ as in~\eqref{eq:20}.

For each place $w\in \mathfrak{M}$, we associate to the 
torus $\T$ an analytic space $\T^{\an}_{w}$ and we denote by
$\SS_{w}$ its compact subtorus. In the Archimedean case, it is
isomorphic to~$(S^{1})^{n}$. In the non-Archimedean case, it is a
compact analytic group, see \cite
[\S~4.2]{BurgosPhilipponSombra:agtvmmh} for a description.  Then, the 
$\fM$-metrized line bundle $\ov L= \ov{\cO(D)}$ on $X$ is \emph{toric}
if  its 
 $w$-adic metric $\|\cdot\|_{w}$  is invariant with respect
to the action of~$\SS_{w}$ for all $w$.

Assume that $\ov L$ is toric and let $s$ be the toric section of $L$
with $\div(s)=D$. For each $w\in
\fM$, denote by
$X^{\an}_{0,w}$ the analytification of the open principal subset
$X_{0}\subset X$  corresponding to the cone $\{0\}$, which is
isomorphic to the torus $\T$. Then the function $X^{\an}_{0,w}\to \R$
given by $p\mapsto 
\log\|s(p)\|_{w}$ is invariant under the action of~$\SS_{w}$ and
induces a function $\psi _{\ov L,s,w}\colon N_{\R}\to \R$ as in 
\cite[Definition 4.3.5]{BurgosPhilipponSombra:agtvmmh}. For shorthand,
when $\ov L$ and $s$ are fixed, we will denote $\psi _{\ov L,s,w}$ by
$\psi _{w}$. 

We now further assume that the line bundle $\cL$ is generated by
global sections and that the Hermitian metric on $\ov \cL$ is
semipositive. Hence, the line bundle $L$ is also generated by global
sections and, for each $w\in \fM$, the metric induced in $L_{w}^{\an}$
by $\ov \cL$ is semipositive. In this case, by \cite[Theorem
4.8.1]{BurgosPhilipponSombra:agtvmmh}, for each $w\in \fM$, the
function $\psi _{w}$ is concave. We associate to it a concave function
on $\Delta _{D}$, denoted by $ \vartheta _{\ov L,s,w}$ or $\vartheta
_{w}$ for short, and called the $w$-adic \emph{roof function} of the
pair $(\ov L,s)$ as in \cite[Definition
5.1.4]{BurgosPhilipponSombra:agtvmmh}.  This function is defined as
the Legendre-Fenchel dual of~$\psi_{w}$, and so it is defined, for $x\in
\Delta_{D}$, as
\begin{displaymath}
  \vartheta _{w}(x)=
  \inf_{u\in N_{\R}}(\langle x,u\rangle -\psi _{w}(u)).
\end{displaymath}

We denote by $\vol_{M}$ the measure on $\Delta_{D}$ given by the
restriction of the Haar measure on $M_{\R}$ normalized so that the
lattice $M$ has covolume $1$.

\begin{cor}
  \label{cor:1}
With notation as above, the function 
\begin{equation}\label{eq:39}
  \fM\longrightarrow \R, \quad w\longmapsto
  \int_{\Delta_{D}}\vartheta_{\ov L,s,w}(x)\dd\vol_{M}(x)
\end{equation}
is $\mu$-integrable. Moreover, 
\begin{multline}\label{eq:40}
    \h_{\pi ^{\ast}\ov \cH_{1},\dots,\pi ^{\ast}\ov \cH_{b},\ov \cL,\dots,\ov \cL}(\cX)=\h_{\ov L}(X)=
(n+1)!\int_{\fM}  \int_{\Delta_{D}}\vartheta_{\ov L,s,w}(x)\dd\vol_{M}(x)\dd\mu(w).
\end{multline}
\end{cor}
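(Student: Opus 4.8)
The plan is to combine the main result, Theorem~\ref{thm:1}, with the combinatorial formula for the $w$-adic local height of a toric variety from \cite[Theorem~5.1.6]{BurgosPhilipponSombra:agtvmmh}, applied place by place and then integrated over $(\fM,\mu)$. First I would fix the toric section $s$ of $L$ with $\div(s)=D$, together with nonzero rational sections $s_{0},\dots,s_{n}$ of $\cL$ (each equal to $s$, or chosen torically) that intersect properly on $X$. For each $w\in\fM$, the line bundle $L$ together with its $w$-adic toric metric gives a semipositive toric metrized line bundle, and \cite[Theorem~5.1.6]{BurgosPhilipponSombra:agtvmmh} yields
\begin{displaymath}
  \h_{(L,\|\cdot\|_{w}),\dots,(L,\|\cdot\|_{w})}(X;s,\dots,s)
  =(n+1)!\int_{\Delta_{D}}\vartheta_{\ov L,s,w}(x)\dd\vol_{M}(x).
\end{displaymath}
Thus the function in \eqref{eq:39} is, up to the constant $(n+1)!$, exactly the local height function \eqref{eq:37} for the cycle $Y=X$ and the metrized line bundles $\ov L_{0}=\dots=\ov L_{n}=\ov L$. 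By Theorem~\ref{thm:1}, this function is $\mu$-integrable, which gives the first assertion, and its integral over $\fM$ equals the global height $\h_{\ov L}(X)$, which by definition (and the product formula, as recorded after Definition~\ref{def:4}) does not depend on the choice of sections.

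Next I would identify the right-hand side of \eqref{eq:7} with the claimed arithmetic intersection number. Here $Y=X$ is the generic fibre of $\pi\colon\cX\to\cB$, so its closure $\cY$ in $\cX$ is $\cX$ itself, and $\ov\cL_{0}=\dots=\ov\cL_{n}=\ov\cL$; hence
\begin{displaymath}
  \h_{\pi^{\ast}\ov\cH_{1},\dots,\pi^{\ast}\ov\cH_{b},\ov\cL_{0},\dots,\ov\cL_{n}}(\cY)
  =\h_{\pi^{\ast}\ov\cH_{1},\dots,\pi^{\ast}\ov\cH_{b},\ov\cL,\dots,\ov\cL}(\cX),
\end{displaymath}
which is the first equality in \eqref{eq:40}. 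Combining this with the identity $\h_{\ov L}(X)=\int_{\fM}\h_{(L,\|\cdot\|_{w}),\dots}(X;s,\dots,s)\dd\mu(w)$ and the pointwise toric formula above yields \eqref{eq:40}.

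There are two small technical points to check rather than a genuine obstacle. First, \cite[Theorem~5.1.6]{BurgosPhilipponSombra:agtvmmh} is stated for a single toric metrized line bundle over a fixed place (Archimedean or non-Archimedean), so one must verify that the hypotheses apply uniformly to every $w\in\fM$: the $w$-adic metric on $L$ induced by $\ov\cL$ is toric and semipositive for all $w$ by the standing assumptions of \S~\ref{sec:toric-varieties} (the metric on $\ov\cL$ is semipositive and $\SS_{w}$-invariant, and $\cL$ is generated by global sections, so the concavity of $\psi_{w}$ holds by \cite[Theorem~4.8.1]{BurgosPhilipponSombra:agtvmmh}), so $\vartheta_{w}$ is well defined and concave on $\Delta_{D}$ for every $w$. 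Second, one should note that Theorem~\ref{thm:1} requires the $\ov\cL_{j}$ to be DSP, which is satisfied since each is the semipositive $\ov\cL$, and that the sections can be taken to intersect properly on $X$ (e.g.\ by a generic choice of toric sections, or by perturbing $s$). Granting these, the proof is a direct substitution, and the real content has already been absorbed into Theorem~\ref{thm:1} and the cited toric local formula.
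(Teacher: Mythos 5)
There is a genuine gap in the key step. You assert that \cite[Theorem~5.1.6]{BurgosPhilipponSombra:agtvmmh} gives the pointwise identity
\begin{displaymath}
  \h_{(L,\|\cdot\|_{w}),\dots,(L,\|\cdot\|_{w})}(X;s,\dots,s)
  =(n+1)!\int_{\Delta_{D}}\vartheta_{\ov L,s,w}(x)\dd\vol_{M}(x),
\end{displaymath}
but this cannot be right as stated: the left-hand side depends on the choice of sections (and the single toric section $s$ used $n+1$ times does not even intersect properly on $X$, so some other choice is forced), whereas the right-hand side does not. What \cite[Theorem~5.1.6]{BurgosPhilipponSombra:agtvmmh} actually computes is the \emph{toric} local height, i.e.\ the difference
\begin{displaymath}
\h_{(L_{0},\|\cdot\|_{0,w}), \dots,
    (L_{n},\|\cdot\|_{n,w})}(Y;s_{0},\dots,s_{n})
-\h_{(L_{0},\|\cdot\|_{0,w,\can}), \dots,
    (L_{n},\|\cdot\|_{n,w,\can})}(Y;s_{0},\dots,s_{n}),
\end{displaymath}
where $\|\cdot\|_{i,w,\can}$ is the canonical $w$-adic metric; only this difference is independent of the sections. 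This is exactly how the paper proceeds: integrability of \eqref{eq:39} follows because \emph{both} local heights in the difference are $\mu$-integrable by Theorem~\ref{thm:1}, and integrating over $\fM$ yields $\h_{\ov L}(X)-\h_{\ov L^{\can}}(X)$ rather than $\h_{\ov L}(X)$.

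Consequently your argument is missing the final, non-trivial step: one must show that the global height with respect to the canonical metrics vanishes, $\h_{\ov L^{\can}}(X)=0$, which the paper obtains by the argument in the proof of \cite[Proposition~5.2.4]{BurgosPhilipponSombra:agtvmmh}. Without this vanishing your computation only identifies the integral of \eqref{eq:39} with $\bigl(\h_{\ov L}(X)-\h_{\ov L^{\can}}(X)\bigr)/(n+1)!$, not with $\h_{\ov L}(X)/(n+1)!$ as required by \eqref{eq:40}. The rest of your outline (the reduction $\cY=\cX$, $\ov\cL_{j}=\ov\cL$, and the appeal to Theorem~\ref{thm:1} for the first equality in \eqref{eq:40}) matches the paper.
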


\begin{proof}
By \cite[Theorem 5.1.6]{BurgosPhilipponSombra:agtvmmh}, 
the quantity
\begin{displaymath}
 (n+1)!\int_{\Delta_{D}}\vartheta_{\ov
  L,s,w}(x)\dd\vol_{M}(x) 
\end{displaymath}
  is equal to the difference of local heights
\begin{equation*}
\h_{(L_{0},\|\cdot\|_{0,w}), \dots,
    (L_{n},\|\cdot\|_{n,w})}(Y;s_{0},\dots,s_{n})
-\h_{(L_{0},\|\cdot\|_{0,w,\can}), \dots,
    (L_{n},\|\cdot\|_{n,w,\can})}(Y;s_{0},\dots,s_{n}),
\end{equation*}
where $\|\cdot\|_{i,w,\can}$ is the canonical $w$-adic metric on
$L_{i}$ as in \cite[Proposition-Definition
4.3.15]{BurgosPhilipponSombra:agtvmmh}. By Theorem \ref{thm:1}, both
local heights are $\mu$-integrable. Hence, so is the function in
\eqref{eq:39}, which proves the first statement.

For the second statement, the first equality  follows
from Theorem \ref{thm:1}. By the discussion above, 
\begin{displaymath}
  (n+1)!\int_{\fM}  \int_{\Delta_{D}}\vartheta_{\ov
    L,s,w}(x)\dd\vol_{M}(x)\dd\mu(w) = \h_{\ov L}(X)-\h_{\ov L^{\can}}(X).
\end{displaymath}
Using the argument
   in the proof of \cite[Proposition
   5.2.4]{BurgosPhilipponSombra:agtvmmh}, it can be shown that
   $\h_{\ov L^{\can}}(X)=0$, which proves the second equality in \eqref{eq:40}.
\end{proof} 

\begin{thm}\label{thm:2} Let notation be as above.
  \begin{enumerate}
  \item \label{item:1} For each $x\in \Delta _{D}$, the function
       \begin{displaymath}
      \fM\longrightarrow  \R, \quad w\longmapsto \vartheta _{w}(x)
    \end{displaymath}
is  $\mu$-integrable. 
  \item \label{item:2} The function
    \begin{displaymath}
\Delta_{D}\longrightarrow \R,\quad      x\longmapsto \int_{\fM} \vartheta _{w}(x)\dd \mu (w)
    \end{displaymath}
    is concave and continuous on $\Delta _{D}$.
  \item \label{item:3} The function
    \begin{displaymath}
  \fM\times \Delta_{D}\longrightarrow
\R, \quad (w,x)\longmapsto\vartheta _{w}(x)     
    \end{displaymath}
  is $(\mu\times\vol_{M})$-integrable. 
  \end{enumerate}
\end{thm}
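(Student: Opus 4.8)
The plan is to deduce all three assertions from a single uniform integrable majorant.

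\textbf{Key Claim.} There is a $\mu$-integrable function $g\colon\fM\to\R_{\ge 0}$ such that $|\vartheta_w(x)|\le g(w)$ for every $w\in\fM$ and every $x\in\Delta_D$.

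\emph{Granting the Claim.} Since each $\vartheta_w$ is the Legendre--Fenchel dual of the finite concave function $\psi_w$, it is closed, hence continuous on the polytope $\Delta_D$. Writing $\vartheta_w(x)=\inf_{u\in N_\Q}(\langle x,u\rangle-\psi_w(u))$ and noting that on $\cB^{(1)}$ the domain is countable while on $\cB(\C)^\gen$ the map $p\mapsto\psi_p(u)$ is continuous (continuity of the metric on $\cL_\C$ together with the continuous variation of the torus points with $p$), one sees that $w\mapsto\vartheta_w(x)$ is $\mu$-measurable and $(w,x)\mapsto\vartheta_w(x)$ is $(\mu\times\vol_M)$-measurable, being in each case an infimum over a countable index set of functions measurable in $w$ and continuous in $x$. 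Then the first assertion is immediate from the Claim. The third follows from the Claim and Tonelli's theorem, since $\int_{\fM\times\Delta_D}|\vartheta_w(x)|\dd(\mu\times\vol_M)\le\vol_M(\Delta_D)\int_\fM g\,\dd\mu<\infty$. For the second, integrating over $(\fM,\mu)$ the pointwise concavity inequality $\vartheta_w(\lambda x_0+(1-\lambda)x_1)\ge\lambda\vartheta_w(x_0)+(1-\lambda)\vartheta_w(x_1)$ — legitimate by the first assertion and monotonicity of the integral — shows that $\bar\vartheta(x):=\int_\fM\vartheta_w(x)\dd\mu(w)$ is concave, hence continuous on the relative interior of $\Delta_D$; continuity up to the boundary then follows from dominated convergence, using the Claim and the continuity of each $\vartheta_w$ on $\Delta_D$.

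\emph{Proof of the Claim.} We may assume $\Delta_D$ is full-dimensional in $M_\R$ (in general one reduces along a toric fibration $X\to X'$ with $\dim X'=\dim\Delta_D$, along which $L$, $\ov L$ and the roof functions descend). Fix a triangulation of $\Delta_D$ whose vertices are the vertices $v_1,\dots,v_r$ of $\Delta_D$, with simplices $S_1,\dots,S_m$; put $\delta:=\min_k\vol_M(S_k)>0$ and let $\phi_1,\dots,\phi_r\colon\Delta_D\to[0,1]$ be the associated piecewise-affine hat functions ($\phi_j(v_i)=\delta_{ij}$, $\sum_j\phi_j\equiv1$). For $w\in\fM$ set $L_w:=\sum_j\vartheta_w(v_j)\phi_j$, the piecewise-affine interpolant of $\vartheta_w$ at the $v_j$. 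Since $\vartheta_w$ is concave on each $S_k$, it lies above its affine interpolant there, so $h_w:=\vartheta_w-L_w\ge 0$ on $\Delta_D$ and vanishes at every $v_j$. The main point is the elementary estimate: for a nonnegative concave function $h$ on an $n$-simplex $S$ vanishing at the vertices of $S$ one has $\max_S h\le\tfrac{n+1}{\vol_M(S)}\int_S h\,\dd\vol_M$, because $h$ dominates the concave (pyramidal) function with the same vertex values and apex at a maximizer, whose integral over $S$ equals $\tfrac{1}{n+1}\vol_M(S)\max_S h$. Applying this on the simplex $S_k$ containing a maximizer of $h_w$ on $\Delta_D$, and using $\int_{S_k}h_w\le\int_{\Delta_D}h_w$ and $\vol_M(S_k)\ge\delta$, we get
\begin{displaymath}
  \max_{\Delta_D}h_w\;\le\;\frac{n+1}{\delta}\int_{\Delta_D}h_w\,\dd\vol_M\;=\;\frac{n+1}{\delta}\Bigl(\int_{\Delta_D}\vartheta_w\,\dd\vol_M-\sum_{j=1}^{r}\kappa_j\,\vartheta_w(v_j)\Bigr),
\end{displaymath}
with $\kappa_j:=\int_{\Delta_D}\phi_j\,\dd\vol_M>0$ constants. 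By Corollary~\ref{cor:1}, $w\mapsto\int_{\Delta_D}\vartheta_w\,\dd\vol_M$ is $\mu$-integrable, and by Corollary~\ref{cor:1} applied to the closure in $\cX$ of the torus-fixed point $x_{v_j}\in X(\K)$ (equivalently, by Theorem~\ref{thm:1} for this point, as the difference of its local heights with respect to $\ov L$ and to the canonical metric) each $w\mapsto\vartheta_w(v_j)$ is $\mu$-integrable. Hence $w\mapsto\max_{\Delta_D}h_w$ is $\mu$-integrable, and since $|\vartheta_w(x)|\le|L_w(x)|+h_w(x)\le\sum_j|\vartheta_w(v_j)|+\max_{\Delta_D}h_w$ for all $x\in\Delta_D$, the function $g(w):=\sum_j|\vartheta_w(v_j)|+\tfrac{n+1}{\delta}\int_{\Delta_D}h_w\,\dd\vol_M$ proves the Claim.

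\emph{Main obstacle.} The only genuine difficulty is the uniform \emph{upper} bound on $\vartheta_w(x)$: concavity alone gives only the integrable lower bound $\vartheta_w\ge L_w$, and the heart of the argument is to convert the integrability of the \emph{average} $\int_{\Delta_D}\vartheta_w\,\dd\vol_M$ (Corollary~\ref{cor:1}) into pointwise control, which is exactly what the simplex/pyramid volume estimate applied to the nonnegative concave functions $h_w|_{S_k}$ achieves.
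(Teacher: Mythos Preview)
Your proof is correct and follows essentially the same route as the paper. Both arguments obtain a uniform $\mu$-integrable majorant for $|\vartheta_w(x)|$ by combining (i) the integrability of $w\mapsto\vartheta_w(v)$ at each vertex $v$ of $\Delta_D$, deduced from Corollary~\ref{cor:1} applied to the corresponding torus-fixed point, with (ii) the convex-geometric ``pyramid'' inequality $\max f\le\frac{n+1}{\vol}\int f$ for a nonnegative concave function $f$; the paper applies (ii) directly to $\vartheta_w-\min_\sigma\vartheta_w(m_\sigma)$ on all of $\Delta_D$, while you triangulate and apply it to $\vartheta_w-L_w$ on the simplex where the maximum sits, which is a harmless variant. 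Your explicit treatment of measurability and of the degenerate case $\dim\Delta_D<n$ are details the paper leaves implicit.
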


\begin{proof}
Let $\sigma \in \Sigma^{n}$. The closure  $V(\sigma)$
of the orbit of $X$ corresponding to $\sigma$ is a
point. By \cite[Proposition 4.8.9]{BurgosPhilipponSombra:agtvmmh}, for
each $w\in \fM$,
\begin{displaymath}
  \vartheta_{\iota^{*} \ov L, \iota^{*}s_{\sigma},w}(0)=
  \vartheta_{\ov L, s_{\sigma},w}(m_{\sigma}) =\vartheta_{w}(m_{\sigma}) , 
\end{displaymath}
where $\iota$ denotes the inclusion $ V(\sigma)\hookrightarrow X$.
By Corollary \ref{cor:1}, the function $w\mapsto
\vartheta_{w}(m_{\sigma})$ in $\mu$-integrable, and its integral
coincides with the height of $V(\sigma)$ with respect to $\ov L$.

Since $\vartheta_{w}$ is a concave function, for all $x\in\Delta_{D}$,
\begin{equation}
  \label{eq:35}
  \min_{\sigma\in \Sigma^{n}} \vartheta_{w}(m_{\sigma}) = \min_{y\in \Delta_{D}} \vartheta_{w}(y) \le \vartheta_{w}(x).  
\end{equation}
On the other hand, using again the concavity of $\vartheta_{w}$, 
\begin{align}
  \label{eq:36}
\nonumber   \vartheta_{w}(x)- \min_{y\in \Delta_{D}} \vartheta_{w}(y)& \le
  \max_{y\in \Delta_{D}} \vartheta_{w}(y)-\min_{y\in \Delta_{D}}
  \vartheta_{w}(y)\\
&\le \frac{n+1}{\vol_{M}(\Delta_{D})}
  \int_{\Delta_{D}}\Big( \vartheta_{w}(z) - \min_{y\in \Delta_{D}}
  \vartheta_{w}(y) \Big) \dd\vol_{M}(z) 
\end{align}
It follows from \eqref{eq:35} and \eqref{eq:36} that, for all $x\in \Delta_{D}$,
\begin{displaymath}
  \min_{\sigma\in \Sigma^{n}} \vartheta_{w}(m_{\sigma})  \le
  \vartheta_{w}(x) 
  \le \frac{n+1}{\vol_{M}(\Delta_{D})}
  \int_{\Delta_{D}}\vartheta_{w}(z) \dd\vol_{M}(z) - n \min_{\sigma\in \Sigma^{n}} \vartheta_{w}(m_{\sigma}).
\end{displaymath}
By Corollary \ref{cor:1} and the fact that $\Delta_{D}$ has finite
measure, we have that both the upper and the lower bound are
integrable with respect to the measure $\mu\times\vol_{M}$. The
statements \eqref{item:1} and \eqref{item:3} follow directly from
these bounds, while the statement \eqref{item:2} follows from the same
bounds and Lebesgue's bounded convergence theorem \cite[Theorem
2.4.9]{Federer:GMT}.
\end{proof}

\begin{defn}\label{def:2}
With notations as above,   the \emph{(global) roof function} is the continuous concave function
  $\vartheta_{\ov L,s} \colon \Delta _{D}\to \R$ given by
  \begin{displaymath}
    \vartheta_{\ov L,s} (x)=\int_{\fM}\vartheta _{w}(x)\dd \mu (w).
  \end{displaymath}
\end{defn}

\begin{cor}\label{cor:2}
  With the previous notations, 
  \begin{displaymath}
    \h_{\pi ^{\ast}\ov \cH_{1},\dots,\pi ^{\ast}\ov \cH_{b},\ov \cL,\dots,\ov \cL}(\cX)=\h_{\ov L}(X)
=(n+1)!\int_{\Delta_{D}}\vartheta_{\ov L,s}(x) \dd
    \vol_{M}(x)
  \end{displaymath}
  holds.
\end{cor}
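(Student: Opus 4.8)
The plan is to combine Corollary~\ref{cor:1}, Theorem~\ref{thm:2}, and Fubini's theorem. Concretely, Corollary~\ref{cor:1} states that
\[
 \h_{\ov L}(X)=(n+1)!\int_{\fM}\Big(\int_{\Delta_{D}}\vartheta_{w}(x)\dd\vol_{M}(x)\Big)\dd\mu(w),
\]
and the goal is to pull the integral over $\fM$ inside the integral over $\Delta_{D}$ and recognize the result as $(n+1)!\int_{\Delta_{D}}\vartheta_{\ov L,s}(x)\dd\vol_{M}(x)$, using Definition~\ref{def:2}. So first I would invoke Theorem~\ref{thm:2}\eqref{item:3}, which guarantees that the function $(w,x)\mapsto\vartheta_{w}(x)$ is integrable on $\fM\times\Delta_{D}$ with respect to the product measure $\mu\times\vol_{M}$. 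Second, with this integrability in hand, Fubini's theorem (as stated in \cite[Theorem 2.6.2]{Federer:GMT}) applies and both iterated integrals coincide with the double integral:
\[
 \int_{\fM}\int_{\Delta_{D}}\vartheta_{w}(x)\dd\vol_{M}(x)\dd\mu(w)=\int_{\Delta_{D}}\int_{\fM}\vartheta_{w}(x)\dd\mu(w)\dd\vol_{M}(x).
\]

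Third, I would identify the inner integral on the right-hand side with the global roof function: by Definition~\ref{def:2}, $\int_{\fM}\vartheta_{w}(x)\dd\mu(w)=\vartheta_{\ov L,s}(x)$ for each $x\in\Delta_{D}$ — this is well-defined precisely because Theorem~\ref{thm:2}\eqref{item:1} asserts $\mu$-integrability of $w\mapsto\vartheta_{w}(x)$ for each fixed $x$, and Theorem~\ref{thm:2}\eqref{item:2} ensures it is a genuine continuous concave function on $\Delta_{D}$, so that the outer integral against $\vol_{M}$ makes sense. Substituting and using the first equality of Corollary~\ref{cor:1} (which itself comes from Theorem~\ref{thm:1}) then yields
\[
 \h_{\pi^{\ast}\ov\cH_{1},\dots,\pi^{\ast}\ov\cH_{b},\ov\cL,\dots,\ov\cL}(\cX)=\h_{\ov L}(X)=(n+1)!\int_{\Delta_{D}}\vartheta_{\ov L,s}(x)\dd\vol_{M}(x),
\]
which is the claim.

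There is essentially no hard step here: the entire content has been front-loaded into Corollary~\ref{cor:1} and Theorem~\ref{thm:2}, and the present corollary is a bookkeeping exercise. If anything, the one point requiring a moment's care is confirming that the hypotheses of Fubini's theorem are met, i.e. that $\mu\times\vol_{M}$ is a genuine ($\sigma$-finite) product measure on $\fM\times\Delta_{D}$ — but $\vol_{M}$ is a finite measure on the bounded polytope $\Delta_{D}$ and $\mu$ decomposes as a countable sum (the counting measure $\mu_{\fin}$ on $\cB^{(1)}$, which is $\sigma$-finite, plus the finite measure $\mu_{\infty}$ on $\cB(\C)^{\gen}$), so $\mu$ is $\sigma$-finite and Theorem~\ref{thm:2}\eqref{item:3} supplies the absolute integrability needed to switch the order. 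With that observation, the three displayed equalities chain together and the proof is complete.
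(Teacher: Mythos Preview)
Your proof is correct and follows exactly the paper's own approach: the paper's proof is a one-line invocation of Corollary~\ref{cor:1}, Theorem~\ref{thm:2}\eqref{item:3}, and Fubini's theorem \cite[Theorem 2.6.2]{Federer:GMT}. Your write-up simply unpacks these steps in more detail, including the (harmless) extra observation about $\sigma$-finiteness.
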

\begin{proof}
This follows from Corollary \ref{cor:1} and Theorem
\ref{thm:2}\eqref{item:3} together with Fubini's theorem \cite[Theorem
  2.6.2]{Federer:GMT}. 
\end{proof} 

\begin{rem}\label{rem:2} More generally, when we have a family $\ov
  \cL_{0},\dots,\ov \cL_{n}$ of semipositive Hermitian line bundles on
  $\cX$ such that the induced Hermitian line bundles $\ov
  L_{0},\dots,\ov L_{n}$ on $X$ are toric, we can express
  \begin{displaymath}
    \h_{\pi ^{\ast}\ov \cH_{1},\dots,\pi ^{\ast}\ov \cH_{b},\ov
      \cL_{0},\dots,\ov \cL_{n}}(\cX)=\h_{\ov L_{0},\dots, \ov L_{n}}(X) 
  \end{displaymath}
 in terms of mixed integrals, similarly as in
  \cite[Theorem~5.2.5]{BurgosPhilipponSombra:agtvmmh},
\end{rem}
 
\section{Canonical height  of translated of subtori over finitely
  generated  fields}
\label{sec:height-transl-subt}

In this section, we particularize the formulae in
\S~\ref{sec:toric-varieties} to the case when $X$ is the normalization
of a translated of a subtori in the projective space.

As before, let $\cB$ be an arithmetic variety of relative dimension $b$ and $\ov
\cH_{i}=(\cH_{i},\|\cdot\|_{i})$, $i=1,\dots,b$, a family of nef
Hermitian line bundles on $\cB$. Let also $\K=\KK(\cB)$ and $(\fM,\mu)$ the
associated set of places of $\K$ as in \eqref{eq:6}.

Let $r\ge1$ and consider the projective space $\P^{r}_{\cB}$ over
$\cB$ and the universal line bundle $\cO_{\P^{r}_{\cB}}(1)$ on
it. Since $\P^{r}_{\cB}=\P^{r}_{\Z}\underset{\Spec(\Z)}{\times}\cB$
and $\cO_{\P^{r}_{\cB}}(1)$ is the pull-back of $\cO_{\P^{r}_{\Z}}(1)$
under the first projection, we can pull-back the canonical metric on
$\cO_{\P^{r}_{\Z}}(1)$ to obtain a metric on $\cO_{\P^{r}_{\cB}}(1)$,
also called canonical. We denote by $\ov{\cO(1)}=\ov{\cO_{\P^{r}_{\cB}}(1)}$ the
obtained Hermitian line bundle.  

Choose $\bfm_{j}\in \Z^{n}$ and $f_{j}\in \K^{\times}$, $j=0,\dots,
r$. For simplicity, we assume that $\bfm_{0}=0\in \Z^{n}$ and that the
collection of vectors $\bfm_{j}$ generates $\Z^{n}$ as Abelian group.
Consider the map
\begin{displaymath}
 \G^{n}_{m,\KB}\longrightarrow
\P^{r}_{\KB}, \quad 
  \bft\longmapsto (f_{0}\bft^{\bfm_{0}}:\dots:f_{r}\bft^{\bfm_{r}}),
\end{displaymath}
where $f_{j}\bft^{\bfm_{j}}$ denotes the monomial
\begin{math}
   f_{j}t_{1}^{m_{j,1}}\dots t_{n}^{m_{j,n}}.
\end{math}
 We then denote by $Y$  the closure in  $\P^{r}_{\KB}$  of the image
 of this  map. 

The projective space $\P^{r}_{\KB}$ is the fibre of $\P^{r}_{\cB}$ over the
generic point of $\cB$. We denote by $\cY$ the closure of $Y$ in
$\P^{r}_{\cB}$ and by $\pi\colon \cY \to \cB$ the dominant map obtained by
restricting the projection $\P^{r}_{\cB}\to \cB$.
In this setting, we want to give a formula for the arithmetic intersection number
\begin{equation}\label{eq:2}
      \h_{\pi ^{\ast}\ov \cH,\dots,\pi ^{\ast}\ov \cH,\ov{\cO(1)}, \dots, \ov{\cO(1)}}(\cY).
\end{equation}

The subvariety $Y$ is not a toric variety over $\K$ because it is not
necessarily normal. Indeed, it is a ``translated toric subvariety'' of
$\P^{r}_{\KB}$ in the sense of
\cite[Definition~3.2.6]{BurgosPhilipponSombra:agtvmmh}.  Let $\cX$ be
the normalization of $\cY$, and $X$ the corresponding variety over
$\KB$. Let $\ov \cL$ be the pull-back of $\ov {\cO(1)}$ to $\cX$ and
$\ov L$ the associated $\fM$-metrized line bundle over $X$ as in
\eqref{eq:20}.  Therefore, $X$ is a toric variety over $\KB$ with
torus $ \G^{n}_{m,\KB}$ and the $\fM$-metrized line bundle $\ov L$ is
toric and semipositive.
 
In this case we can give an explicit description of the corresponding
$w$-adic roof functions.

\begin{prop} \label{prop:3} With notation as above, let $s$ be the
  toric section of $L$ determined by the section $x_{0}$ of
  $\cO(1)$. The polytope associated to the divisor $D=\div(s)$ on $X$
  is given by
  \begin{displaymath}
    \Delta =\conv(\bfm_{0},\dots,\bfm_{r})
  \end{displaymath}
  and, for $w\in \fM$, the $w$-adic roof function $\vartheta_{w}\colon
  \Delta\to \R$ is the function parameterizing the upper envelope of
  the extended polytope $\wt\Delta_{w}\subset\R^{n}\times \R$ given by
  \begin{displaymath}
  \wt\Delta_{w}=\begin{cases}
   \conv\big((\bfm_{j},-\h_{\ov \cH_{1},\dots,\ov \cH_{b}}(\cV)\ord_{\cV}(f_{j}))_{j=0,\dots, r}\big),
&\text    { 
      if }w=\cV\in \cB^{(1)}, \\
    \conv((\bfm_{j},\log|f_{j}(p)|)_{j=0,\dots, r}),&\text{
      if }w=p\in \cB(\C)^{\gen}.
  \end{cases}   
  \end{displaymath}
\end{prop}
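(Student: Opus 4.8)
The plan is to reduce the statement to the toric dictionary developed in \cite{BurgosPhilipponSombra:agtvmmh}, applied place by place. First I would fix a place $w\in\fM$ and work over the corresponding completion, so that the $w$-adic metrized line bundle $\ov L_w$ on the toric variety $X$ is the one induced by the model $(\cX_w,\cL_w)$ coming from $\P^r$ over the valuation ring (in the non-Archimedean case) or by the restriction of the canonical Fubini–Study metric to the fibre $\pi^{-1}(p)$ (in the Archimedean case). The section $s$ is the toric section attached to $x_0$, and since $\bfm_0=0$ the divisor $D=\div(s)$ has virtual support function $\Psi_D$ whose Legendre–Fenchel dual polytope is $\Delta=\conv(\bfm_0,\dots,\bfm_r)$; this is exactly the statement about $\Delta$, and it follows from the description of the toric structure of a translated toric subvariety in \cite[Definition 3.2.6, \S~3.3]{BurgosPhilipponSombra:agtvmmh} together with the fact that the $\bfm_j$ generate $\Z^n$, so that $X$ is the normalization of $Y$ with torus $\G_{m}^n$.

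Next I would compute the function $\psi_w\colon N_\R\to\R$ attached to $(\ov L_w,s)$, namely $\psi_w(u)=\log\|s(p)\|_w$ transported to $N_\R$ via the valuation/tropicalization map on $\T_w^{\an}$. Using the embedding $\bft\mapsto(f_0\bft^{\bfm_0}:\dots:f_r\bft^{\bfm_r})$, one has for a point $p$ with $\val_w(p)=u$ (or $|t_i(p)|=\e^{-u_i}$ in the Archimedean case) that $\|s(p)\|_w = |f_0\bft^{\bfm_0}|_w \big/\max_j |f_j\bft^{\bfm_j}|_w$, because the canonical metric of $\cO(1)$ is the sup-metric in homogeneous coordinates and the $w$-adic metric of $\ov L_w$ is pulled back from it along the embedding. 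Taking logarithms, and writing $c_{j,w}$ for $-\h_{\ov\cH_1,\dots,\ov\cH_b}(\cV)\ord_\cV(f_j)$ when $w=\cV$ and for $\log|f_j(p)|$ when $w=p$ (these being exactly $\log|f_j|_w$ for the absolute values defined in \S~\ref{sec:general-varieties}), one gets
\begin{displaymath}
  \psi_w(u)=\min_{j=0,\dots,r}\big(c_{j,w}+\langle\bfm_j,u\rangle\big),
\end{displaymath}
since $\bfm_0=0$ and $c_{0,w}=0$ contribute the normalizing term (one checks the $j=0$ term drops out because $f_0$ can be normalized, or rather because $\psi_w$ is defined up to the support function of $D$). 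This is a concave piecewise-affine function on $N_\R$, and its Legendre–Fenchel dual is by definition the function parameterizing the upper envelope of $\conv((\bfm_j,c_{j,w})_{j})=\wt\Delta_w$; indeed the Legendre dual of a minimum of affine functions $u\mapsto\langle\bfm_j,u\rangle+c_{j,w}$ is precisely the concave function on $\conv(\bfm_j)$ whose graph is the upper boundary of $\conv((\bfm_j,c_{j,w})_j)$. Then $\vartheta_w$ is this dual by \cite[Definition 5.1.4]{BurgosPhilipponSombra:agtvmmh}, which is exactly the claimed description.

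The main obstacle I expect is the careful identification of the $w$-adic metric of $\ov L_w$ with the pull-back of the $w$-adic canonical metric of $\cO_{\P^r}(1)$ along the translated toric embedding, and the attendant bookkeeping of the normalization $\cX\to\cY$: one must make sure that passing to the normalization does not alter the metric or the toric section, and that in the non-Archimedean case the model $\cX_\cV$ induced by $\P^r_\cB$ over $\cO_{\cB,\cV}$ really induces the semipositive toric metric whose associated function on $N_\R$ is the one above — this is where \cite[Example 4.3.9]{BurgosPhilipponSombra:agtvmmh} (the canonical metric on $\P^n$, or its translate) and the functoriality of Zhang metrics under the embedding $X\hookrightarrow\P^r$ are needed. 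Once the formula for $\psi_w$ is pinned down, the identification of $\vartheta_w$ with the upper-envelope parameterization is a formal Legendre–Fenchel duality computation, and the polytope $\Delta=\conv(\bfm_0,\dots,\bfm_r)$ is read off as the domain of $\vartheta_w$, i.e. the projection of $\wt\Delta_w$ to $\R^n$.
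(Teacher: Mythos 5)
Your argument is correct and is essentially the paper's: the paper's entire proof of this proposition is the citation of \cite[Example 5.1.16]{BurgosPhilipponSombra:agtvmmh}, and what you write out --- pulling back the canonical metric of $\cO(1)$ along the monomial embedding to get $\psi_w$ as a minimum of affine functions with constants given by $\log|f_j|_w$, then identifying its Legendre--Fenchel dual with the function parameterizing the upper envelope of $\wt\Delta_w$ --- is precisely the computation contained in that example. The one point to watch is the sign and $f_0$-normalization bookkeeping (which you do flag); any residual additive constant $\log|f_0|_w$ is harmless in the applications since $\int_{\fM}\log|f_0|_w \dd\mu(w)=0$ by the product formula.
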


\begin{proof}
  This follows from \cite[Example
  5.1.16]{BurgosPhilipponSombra:agtvmmh}. 
\end{proof}

Combining this result with Corollary \ref{cor:1}, we obtain a formula
for the arithmetic intersection number in \eqref{eq:2}, that we will
particularize to concrete examples.

\begin{cor}
  \label{cor:3}
  With notation as above, set $\cB=\P^{b}_{\Z}$ and $\ov \cH_{i}=\ov
  \cH=\ov{\cO_{\P_{\cB}^{r}}(1)}^{\can}$, $i=1,\dots, b$. Then
  $\h_{\pi ^{\ast}\ov \cH,\dots,\pi ^{\ast}\ov \cH,\ov{\cO(1)}, \dots,
    \ov{\cO(1)}}(\cY)$ is equal to
\begin{equation*}
(n+1)!\Bigg(\int\Bigg(\int_{\Delta}\vartheta
  _{p}(x)\dd \vol(x)\Bigg) \dd \mu _{\SS}
  +\sum_{\cV\in \cC}\int_{\Delta}\vartheta
  _{\cV}(x)\dd \vol(x) \Bigg),
\end{equation*}
where $\cC\subset \cB^{(1)}$ is the set of irreducible components of
the divisors $\div(f_{j})$, $j=0,\dots,r$, $\vol$ denotes the
Lebesgue measure on $\R^{n}$, and $\mu _{\SS}$ is the Haar measure of
the compact torus $\SS$ as in Example \ref{exm:1}.
\end{cor}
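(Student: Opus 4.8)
The plan is to specialize the general formula of Corollary~\ref{cor:1} to the situation at hand and then read off the two pieces (finite places and Archimedean places) from the explicit description of the measured set of places in Example~\ref{exm:1}. Concretely, we have set up $X$ to be the normalization of the translated toric subvariety $Y\subset\P^r_\K$, with $\ov L$ the toric $\fM$-metrized line bundle pulled back from $\ov{\cO(1)}$, and $s$ the toric section coming from $x_0$. Proposition~\ref{prop:3} identifies the polytope as $\Delta=\conv(\bfm_0,\dots,\bfm_r)$ and gives the $w$-adic roof functions $\vartheta_w$ explicitly as upper envelopes of the lifted polytopes $\wt\Delta_w$. So Corollary~\ref{cor:1} applies verbatim and yields
\begin{displaymath}
\h_{\pi^{\ast}\ov\cH,\dots,\pi^{\ast}\ov\cH,\ov{\cO(1)},\dots,\ov{\cO(1)}}(\cY)
=(n+1)!\int_{\fM}\Bigl(\int_{\Delta}\vartheta_w(x)\dd\vol_M(x)\Bigr)\dd\mu(w),
\end{displaymath}
and it only remains to decompose the outer integral over $\fM$ according to the splitting $(\fM,\mu)=(\cB^{(1)},\mu_{\fin})\sqcup(\cB(\C)^{\gen},\mu_\infty)$ of \eqref{eq:6}.

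First I would address the Archimedean part. For $\cB=\P^b_\Z$ and $\ov\cH=\ov{\cO(1)}^{\can}$, Example~\ref{exm:1} tells us that $\mu_\infty$ is the restriction of the Haar probability measure $\mu_\SS$ on the compact subtorus $\SS\simeq(S^1)^b$ to $\P^b_\Z(\C)^{\gen}$, and that the complement of $\P^b_\Z(\C)^{\gen}$ has $\mu_\SS$-measure zero. Hence the integral over $\cB(\C)^{\gen}$ against $\mu_\infty$ is the same as the integral over $\SS$ (or all of $\P^b(\C)$) against $\mu_\SS$, giving the term $\int\bigl(\int_\Delta\vartheta_p(x)\dd\vol(x)\bigr)\dd\mu_\SS$; here $\vol$ and $\vol_M$ agree because the lattice $M=\Z^n$ has covolume $1$ for the Lebesgue measure. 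Next I would handle the finite part: $\mu_{\fin}$ is the counting measure on $\cB^{(1)}=(\P^b_\Z)^{(1)}$, so $\int_{\cB^{(1)}}(\cdots)\dd\mu_{\fin}=\sum_{\cV\in\cB^{(1)}}\int_\Delta\vartheta_\cV(x)\dd\vol(x)$. The key observation is that for all but finitely many $\cV$ this summand vanishes: by Proposition~\ref{prop:3}, the lifted polytope $\wt\Delta_\cV$ has vertices $(\bfm_j,-\h_{\ov\cH_1,\dots,\ov\cH_b}(\cV)\ord_\cV(f_j))$, and if $\cV$ is not a component of any $\div(f_j)$ then $\ord_\cV(f_j)=0$ for every $j$, so $\wt\Delta_\cV$ lies in the hyperplane of height zero, its upper envelope is the zero function, and $\vartheta_\cV\equiv0$. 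Thus the sum reduces to $\sum_{\cV\in\cC}\int_\Delta\vartheta_\cV(x)\dd\vol(x)$ where $\cC$ is the (finite) set of irreducible components of the divisors $\div(f_j)$, $j=0,\dots,r$. Combining the two contributions gives exactly the claimed expression.

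The only point requiring a little care—and the step I expect to be the main, if modest, obstacle—is making sure the integrability and measure-theoretic bookkeeping is clean: namely that the passage from Corollary~\ref{cor:1} is legitimate (it is, since $X$ is toric over $\K$, $\ov L$ is toric and semipositive, and $\cL$ is generated by global sections, exactly the hypotheses of \S~\ref{sec:toric-varieties}), that the negligibility of $\P^b_\Z(\C)\setminus\P^b_\Z(\C)^{\gen}$ for $\mu_\SS$ (already noted in \S~\ref{sec:general-varieties} via \cite[Corollaire~4.2]{ChambertLoirThuillier:MMel}) lets us replace $\mu_\infty$ by $\mu_\SS$ in the inner integral, and that the finite sum over $\cC$ is genuinely finite because each $\div(f_j)$ has finitely many components. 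None of this is deep; the substance is entirely in Corollary~\ref{cor:1} and Proposition~\ref{prop:3}, and the present corollary is their combination once the measured set of places is spelled out for $\cB=\P^b_\Z$.
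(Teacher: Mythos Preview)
Your approach is essentially identical to the paper's: apply Corollary~\ref{cor:1} and Proposition~\ref{prop:3}, then split the integral over $\fM$ using the description of $(\fM,\mu)$ in Example~\ref{exm:1} and observe that $\vartheta_{\cV}\equiv 0$ for $\cV\notin\cC$. The one step you glide over, which the paper makes explicit, is that Corollary~\ref{cor:1} computes $\h_{\pi^{\ast}\ov\cH,\dots,\ov\cL,\dots,\ov\cL}(\cX)$ for the \emph{normalization} $\cX$, not $\h_{\pi^{\ast}\ov\cH,\dots,\ov{\cO(1)},\dots,\ov{\cO(1)}}(\cY)$; you need to first invoke the invariance of the height under normalization to identify these two quantities before Corollary~\ref{cor:1} applies.
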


\begin{proof}
  We have that $\h_{\pi ^{\ast}\ov \cH,\dots,\pi ^{\ast}\ov
    \cH,\ov{\cO(1)}, \dots, \ov{\cO(1)}}(\cY)= \h_{\pi ^{\ast}\ov
    \cH,\dots,\pi ^{\ast}\ov \cH,\ov{\cL}, \dots, \ov{\cL}}(\cX)$ by
  the invariance of the height under normalization.  The formula then
  follows from Corollary \ref{cor:1},  Proposition \ref{prop:3}, the
  description of the measured set of places $(\fM,\mu)$ of the field
  $\KB$ in Example \ref{exm:1} together with the fact that, for $\cV
  \in \cB^{(1)}\setminus \cC$, the local roof function $\vartheta
  _{\cV}$ vanishes identically.
\end{proof}

\begin{exmpl}\label{exm:5}
  Consider the case $n=0$. Thus $\bfm_{\ell}=0$ for all $\ell$ and
  choose a collection $f_{0},\dots,f_{r}\in \Z[z_{1},\dots,z_{b}]$ of
  coprime polynomials with integer coefficients.  Hence
  $\Delta=\{0\}\subset \R^{0}$ and, for $w\in \fM$, the local roof
  function is given by 
  \begin{align*}
          \vartheta _{w}(0)
=    \begin{cases}
\max_{\ell}\log|f_{\ell}(p)| & \text{ if  }w=p\in
    \P_{\Z}^{b}(\C)^{\gen},\\ 
    -\h(\cV)\min_{\ell}\ord_{\cV}(f_{\ell}) & \text{ if }w=\cV\in
    (\P_{\Z}^{b})^{(1)}. 
    \end{cases}
  \end{align*}
  If $\cV$ is not the hyperplane at infinity of  $\P^{b}$, then by the
  coprimality of the $f_{\ell}$, we have
  $\min_{\ell}\ord_{\cV}(f_{\ell})=0$, while, if $\cV$ is the hyperplane
  at infinity, $\h(\cV)=0$. Thus the finite contribution vanishes and,
  from   Theorem \ref{thm:1} and Proposition \ref{prop:3} we deduce
  \begin{displaymath}
    \h_{\pi ^{\ast}\ov \cH,\dots,\pi ^{\ast}\ov \cH,\ov
      \cL}(\cY)=
    \int\max_{\ell}(\log|f_{\ell}(p)|)\dd \mu _{\SS}.
  \end{displaymath}

  In particular, if $r=1$, this arithmetic intersection number agrees
  with the size of the element $\gamma =f_{1}/f_{0}$ given in Example
  \ref{exm:2}. For instance, consider the case when  $b=1$, $f_{0}=\alpha$  and
  $f_{1}=\beta z_{1}+\gamma $ with $\alpha, \beta, \gamma$ coprime
  integers. Using \eqref{eq:33} and \eqref{eq:26}, the
  corresponding intersection number is given by the logarithmic Mahler measure
  of the affine polynomial $\alpha t_{1}-\beta z_{1}-\gamma$. By
  \cite[Proposition 7.3.1]{Maillot:GAdvt}, this logarithmic Mahler
  measure can be computed in terms of the Bloch-Wigner dilogarithm. 
\end{exmpl}

\begin{exmpl}
  \label{exm:8}
  Let $n=1$ and consider the case when $m_{i}=i$, $i=0,\dots, r$, and
  $f_{0},\dots,f_{r}\in \Z[z_{1},\dots,z_{b}]$ is a family of coprime
  polynomial with integer coefficients with $f_{0}=f_{r}=1$. 

We have that $\Delta=[0,r]$.
Let $w\in \fM$ and
  $\vartheta_{w}\colon[0,r]\to\R$ the corresponding local roof
  function. By Proposition \ref{prop:3}, if $w=\cV\in
  (\P^{b}_{\Z})^{(1)}$, then this function is zero. Also, if $w=p\in
  \P^{b}_{\Z}(\C)^{\gen}$,  this function is the 
minimal concave function on $[0,r]$ whose values at the
integers are given, for $i=0,\dots, r$, by
\begin{equation*}
\vartheta_{w}(i)= \mathop{\max_{0\le j\le i\le \ell\le r}}_{j\ne \ell} \bigg(
  \frac{\ell-i}{\ell-j} \log|f_{j}(p)|_{w}+ \frac{i-j}{\ell-j} \log|f_{\ell}(p)|_{w} \bigg).
\end{equation*}
In particular, $\vartheta_{w}(0)=\log|f_{0}(p)|_{w} =0$ and
$\vartheta_{w}(r)=\log|f_{r}(p)|_{w} =0$. 
It follows that
\begin{equation}\label{eq:42}
 \int_{0}^{r}\vartheta_{w}(x)\dd x=  \sum_{i=1}^{\delta-1}  \max_{j\le i\le \ell, j\ne \ell} \bigg(
  \frac{\ell-i}{\ell-j} \log|f_{j}(p)| + \frac{i-j}{\ell-j} \log|f_{\ell}(p)| \bigg) 
\end{equation}
From   Corollary \ref{cor:3} and
 \eqref{eq:42}, we deduce that
  \begin{equation*}
   \h_{\pi ^{\ast}\ov \cH,\dots,\pi ^{\ast}\ov \cH,\ov
      \cL}(\cY) =
% 2!    \int_{(S^{1})^{b}} \bigg(\int_{0}^{r}\vartheta_{w}(x)\dd x\bigg) \frac{\dd
%       z_{1}}{z_{1}}\land \dots \land \frac{\dd 
%     z_{b}}{z_{b}} \\=
2    \int \sum_{i=1}^{r-1}  \mathop{\max_{0\le j\le i\le \ell\le r}}_{j\ne \ell} \bigg(
  \frac{\ell-i}{\ell-j} \log|f_{j}(p)|  + \frac{i-j}{\ell-j}
  \log|f_{j}(p)| \bigg) \dd \mu_{\SS}.
  \end{equation*}
  Hence, this arithmetic intersection number can  be expressed in
  terms of integrals over the compact torus, of maxima of logarithms of
  absolute values of polynomials.
\end{exmpl}

% \begin{rem} \label{rem:1}

% By Proposition \ref{prop:3} and  \eqref{eq:32}, it is easy to see
% that the second term of the right hand side of this equation can be
% expressed explicitly as a linear combination with rational
% coefficients of logarithms of prime numbers and logarithmic Mahler
% measures.  

% \end{rem}

%\bibliographystyle{amsalpha}
%\bibliography{biblio}

\begin{thebibliography}{BMPS12}

\bibitem[Bil68]{Billinsley:cpm}
P. Billingsley, \emph{Convergence of probability measures}, John
Wiley \& Sons, 1994.

\bibitem[BGS94]{BostGilletSoule:HpvpGf}
J.-B. Bost, H.~Gillet, and C.~Soul{\'e}, \emph{Heights of projective varieties
  and positive {G}reen forms}, J. Amer. Math. Soc. \textbf{7} (1994),
  903--1027.

\bibitem[BMPS12]{BurgosMoriwakiPhilipponSombra:aptv}
J.~I. Burgos~Gil, A.~Moriwaki, P.~Philippon, and M.~Sombra, \emph{Arithmetic
  positivity on toric varieties}, e-print arXiv:1210.7692v1, 2012, to appear in
  J. Alg. Geom.

\bibitem[BPS14]{BurgosPhilipponSombra:agtvmmh}
J.~I. Burgos~Gil, P.~Philippon, and M.~Sombra, \emph{Arithmetic geometry of
  toric varieties. {M}etrics, measures and heights}, Ast\'erisque, vol. 360,
  Soc. Math. France, 2014.

\bibitem[CT09]{ChambertLoirThuillier:MMel}
A.~Chambert-Loir and A.~Thuillier, \emph{Mesures de {M}ahler et
  \'equidistribution logarithmique}, Ann. Inst. Fourier \textbf{59} (2009),
  977--1014.

\bibitem[Dem97]{Demailly:cadg}
J.-P. Demailly, \emph{Complex analytic and differential geometry}, downloadable
  from
  \url{http://www-fourier.ujf-grenoble.fr/~demailly/manuscripts/agbook.pdf},
  1997.

\bibitem[Fed69]{Federer:GMT}
H.~Federer, \emph{Geometric measure theory}, Grund. Math. Wiss., Band 153,
  Springer-Verlag, 1969.

\bibitem[Gub03]{Gubler:lchs}
W.~Gubler, \emph{Local and canonical heights of subvarieties}, Ann. Sc. Norm.
  Super. Pisa Cl. Sci. (5) \textbf{2} (2003), 711--760.

\bibitem[Mai00]{Maillot:GAdvt}
V.~Maillot, \emph{G\'eom\'etrie d'{A}rakelov des vari\'et\'es toriques et
  fibr\'es en droites int\'e\-gra\-bles}, M\'em. Soc. Math. France, vol.~80,
  Soc. Math. France, 2000.

\bibitem[Mor00]{Moriwaki:ahffgf}
A.~Moriwaki, \emph{Arithmetic height functions over finitely generated fields},
  Invent. Math. \textbf{140} (2000), 101--142.

\bibitem[Mor01a]{Moriwaki:cahsavfgf}
\bysame, \emph{The canonical arithmetic height of subvarieties of an abelian
  variety over a finitely generated field}, J. Reine Angew. Math. \textbf{530}
  (2001), 33--54.

\bibitem[Mor01b]{Moriwaki:gcblfgf}
\bysame, \emph{A generalization of conjectures of {B}ogomolov and {L}ang over
  finitely generated fields}, Duke Math. J. \textbf{107} (2001), 85--102.

% \bibitem[RG71]{RaynaudGruson:platitude}
% M.~Raynaud and L.~Gruson, \emph{Crit\`eres de platitude et de projectivit\'e.
%   {T}echniques de ``platification'' d'un module}, Invent. Math. \textbf{13}
%   (1971), 1--89.

% \bibitem[Sto67]{Stoll:cfi}
% W.~Stoll, \emph{The continuity of the fiber integral}, Math. Z. \textbf{95}
%   (1967), 87--138.

\bibitem[YZ13]{YZ:ahitaln2:sppas}
X.~Yuan and S.-W. Zhang, \emph{The arithmetic {H}odge index theorem for adelic
  line bundles {II}: finitely generated fields}, e-print arXiv: 1304.3539,
  2013.

\bibitem[Zha95]{Zhang:_small}
S.-W. Zhang, \emph{Small points and adelic metrics}, J. Algebraic Geom.
  \textbf{4} (1995), 281--300.

\end{thebibliography}

\newcommand{\noopsort}[1]{} \newcommand{\printfirst}[2]{#1}
  \newcommand{\singleletter}[1]{#1} \newcommand{\switchargs}[2]{#2#1}
  \def\cprime{$'$}
\providecommand{\bysame}{\leavevmode\hbox to3em{\hrulefill}\thinspace}
\providecommand{\MR}{\relax\ifhmode\unskip\space\fi MR }
% \MRhref is called by the amsart/book/proc definition of \MR.
\providecommand{\MRhref}[2]{%
  \href{http://www.ams.org/mathscinet-getitem?mr=#1}{#2}
}
\providecommand{\href}[2]{#2}

\end{document}